\DeclareSymbolFont{cyrletters}{OT2}{wncyr}{m}{n}
\DeclareMathSymbol{\Sha}{\mathalpha}{cyrletters}{"58}
\newcommand{\Z}{\ensuremath{{\mathbb{Z}}}\xspace}
\renewcommand{\P}{\ensuremath{{\mathbb{P}}}}
\newcommand{\Q}{\ensuremath{{\mathbb{Q}}}}
\newcommand{\E}{\ensuremath{{\mathbb{E}}}}
\newcommand{\ra}{\rightarrow}
\newcommand\Hom{\operatorname{Hom}}
\newcommand\Aut{\operatorname{Aut}}
\newcommand\im{\operatorname{im}}
\newcommand\Sur{\operatorname{Sur}}
\newcommand\tensor{\otimes}
\newcommand\isom{\simeq}
\newcommand\sub{\subset}
\newcommand\cok{\operatorname{cok}}
\newcommand\cO{\mathcal{O}}
\newcommand\bq{\begin{equation}}
\newcommand\eq{\end{equation}}
\newtheorem{proposition}{Proposition}[section]
\newtheorem{theorem}[proposition]{Theorem}
\newtheorem{corollary}[proposition]{Corollary}
\newtheorem{lemma}[proposition]{Lemma}
\newtheorem{conjecture}[proposition]{Conjecture}
\theoremstyle{remark}
\newtheorem{remark}[proposition]{Remark}
\newenvironment{definition}{\vspace{2 ex}{\noindent{\bf Definition. }}}{\vspace{2 ex}}
\newtheorem{nts}{Note to self}
\title{Random integral matrices and the Cohen Lenstra Heuristics}
\author{Melanie Matchett Wood}
\address{Department of Mathematics\\
University of Wisconsin-Madison \\ 480 Lincoln Drive \\
Madison, WI 53705 USA\\
and
American Institute of Mathematics\\600 East Brokaw Road\\
San Jose, CA 95112 USA} 
\email{mmwood@math.wisc.edu}
\begin{document}
\begin{abstract}
We prove that given any $\epsilon>0$, random integral $n\times n$ matrices with independent entries that lie in any residue class modulo a prime with probability at most $1-\epsilon$  have cokernels asymptotically (as $n\ra\infty$) distributed as in the distribution on finite abelian groups that Cohen and Lenstra conjecture as the distribution for class groups of imaginary quadratic fields. 
This is a refinement of a result on the distribution of ranks of random matrices with independent entries in $\Z/p\Z$.
This is interesting especially in light of the fact that these class groups are naturally cokernels of square matrices.
 We also prove the analogue for $n\times (n+u)$ matrices.
\end{abstract}

\maketitle

\section{Introduction}
The Cohen-Lenstra heuristics are conjectures made by Cohen and Lenstra \cite{Cohen1984} on the distribution of class groups of quadratic number fields.  For a prime $p,$ we write $G_p$ for the Sylow $p$-subgroup of an abelian group $G$.  We write $\operatorname{Cl}(K)$ for the class group of a number field $K$.

\begin{conjecture}[Cohen and Lenstra \cite{Cohen1984}
]\label{C:CL}
Let $p$ be an odd prime.
Let $S^-_X$ be the set of negative fundamental discriminants $D\geq -X$.  Let $p$ be an odd prime and $B$ be a finite abelian $p$-group.
Then
$$
\lim_{X \ra\infty} \frac{\#\{D\in S^-_X \,|\, \operatorname{Cl}(\Q(\sqrt{D})_p \isom B \} }{|S^-_X|} =\frac{\prod_{k=1}^\infty (1-p^{-k})}{|\Aut(B)|}.
$$
\end{conjecture}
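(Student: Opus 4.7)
The plan is to realize $\operatorname{Cl}(\Q(\sqrt{D}))_p$ as the cokernel of a square integer matrix $M(D)$ whose entries depend in a concrete way on the arithmetic of $\Q(\sqrt{D})$, and then to apply the main theorem of this paper to compute the asymptotic distribution over $D\in S^-_X$. Candidate constructions include a presentation matrix from relations among a generating set of prime ideal classes, a description of the $p$-class group via Galois cohomology of the maximal unramified $p$-extension, or a matrix arising from the $p$-adic reduction of a Bhargava-style orbit space parametrizing ideal classes. In each case the class group appears as $\coker M(D)$ for a square integer matrix whose size can, at least in principle, be taken to grow with $D$ (for instance, by enlarging the chosen set of primes).

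First I would fix one such model and study the joint distribution of the reductions of its entries modulo $p^k$ as $D$ ranges over $S^-_X$. The goal is to show that, after an appropriate normalization, the entries become asymptotically independent and that no residue class mod $p$ is attained with probability greater than $1-\epsilon$. If this can be established, the hypotheses of the main theorem of the paper apply, and the cokernel distribution over $S^-_X$ converges to $\prod_k (1-p^{-k})/|\Aut(B)|$, giving exactly the Cohen-Lenstra prediction.

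The second step is to control the dependence on the matrix size. The paper's theorem supplies an asymptotic as $n \ra \infty$, whereas for each individual $D$ the matrix $M(D)$ has some fixed size $n(D)$. One either needs a version of the theorem that applies uniformly in $n$ so that families of matrices of growing size still equidistribute to the limiting law, or one needs matrix models of unbounded size per fixed $D$ whose cokernels stabilize to $\operatorname{Cl}(\Q(\sqrt{D}))_p$. The paper's treatment of $n \times (n+u)$ matrices suggests some robustness here, but translating this flexibility into the arithmetic setting requires care.

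The main obstacle, and the reason Conjecture \ref{C:CL} remains open, is the equidistribution input itself. Entries of any naturally defined $M(D)$ encode deep class-field-theoretic data about $\Q(\sqrt{D})$, and showing that they behave as independent uniform-modulo-$p^k$ random variables as $D$ varies is an averaging statement well beyond current analytic technology: even the average size of the $3$-torsion (Davenport-Heilbronn) required substantial effort, and higher moments are understood only in restricted regimes. Accordingly, the theorem of this paper should be viewed as a structural explanation for the Cohen-Lenstra distribution \emph{conditional on} the pseudorandomness of such matrices, rather than a direct route to a proof of Conjecture \ref{C:CL}.
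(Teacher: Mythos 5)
The statement you were asked to prove is Conjecture~\ref{C:CL}, the Cohen--Lenstra conjecture itself, which the paper explicitly labels as a conjecture and does not prove; it remains open. Your ``proposal'' is therefore not a proof but an honest account of the obstruction, and that account agrees with the paper's own framing in the introduction. The specific matrix model the paper has in mind is exactly your first candidate: by equation~\eqref{E:casm}, $\operatorname{Cl}(K)_p = \cok(\cO_S^*\otimes_\Z\Z_p \to I_K^S\otimes_\Z\Z_p)$, a square $p$-adic matrix $R_D$ whose size $|S|$ can be taken to grow with $D$. The paper's point is that one does not need Haar equidistribution of $R_D$ to get the Cohen--Lenstra law; by Theorem~\ref{T:MainIntro} (proved via Theorem~\ref{T:MomMat} and Theorem~\ref{T:MomDetDetail}) it suffices that the entries be independent and $\epsilon$-balanced. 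You correctly identify that even this weaker pseudorandomness input for the arithmetic matrices $R_D$ is an averaging statement about class-field-theoretic data far beyond current analytic technology, which is precisely why the conjecture is open and why the paper presents its result as a structural heuristic rather than a proof.

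Two small calibrations. First, your concern about uniformity in $n$ is not the bottleneck: Theorem~\ref{T:MomMat} carries explicit error bounds of the form $Ke^{-cn}$, so any model with $n(D)\to\infty$ as $D\to\infty$ would inherit the limiting law once the entry hypotheses were verified. Second, the $n\times(n+u)$ flexibility in the paper is not extra slack for this purpose; it is there because $\cO_S^*$ has rank $|S|$ for imaginary quadratic fields (giving $u=0$ and the $\prod(1-p^{-k})/|\Aut(B)|$ law) but rank $|S|+1$ for real quadratic fields (giving $u=1$ and the $\prod(1-p^{-k-1})/(|B||\Aut(B)|)$ law). With those adjustments, your reading of the relationship between the paper's theorem and Conjecture~\ref{C:CL} is accurate.
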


Friedman and Washington \cite{Friedman1989} show that if $H_n \in M_{n\times n}(\Z_p)$ is a random matrix drawn with respect to Haar measure on the space of $n\times n$ matrices over $\Z_p$, then
\begin{equation}\label{E:FW}
\lim_{n\ra\infty} \P(\cok(H_n) \isom B) =\frac{\prod_{k=1}^\infty (1-p^{-k})}{|\Aut(B)|}.
\end{equation}
In other words, cokernels of random $p$-adic square matrices drawn with respect to Haar measure are distributed 
according to Cohen and Lenstra's conjectured distribution of class groups (asymptotically as the size of the matrices grows).

Let $K=\Q(\sqrt{D})$ for some $D\in S^-_X$, and $S$ be any finite set of primes of $K$ that generate $\operatorname{Cl}(K)$.  
We write $\cO_S^*$ for the $S$-units in the integers $\cO_K$, and $I_K^{S}$ for the abelian group of fractional ideals generated by the elements of $S$.
Then 
\begin{equation}\label{E:casm}
\operatorname{Cl}(K)=\cok( \cO_S^* \ra I_K^{S} ),
\end{equation}
where the map takes $\alpha\mapsto (\alpha)$.
So $\operatorname{Cl}(K)_p=\cok( \cO_S^* \tensor_\Z \Z_p \ra I_K^{S} \tensor_\Z \Z_p)$.
Since  $I_K^{S}$ and $\cO_S^*$ are both free abelian groups of rank $|S|$ (when $-D> 4$), we have written 
$\operatorname{Cl}(K)_p$ as a cokernel of a $p$-adic square matrix $R_D\in M_{n\times n}(\Z_p)$.
As $D$ varies, we have a random  $p$-adic square matrix $R_D\in M_{n\times n}(\Z_p)$ (where $n$ depends on $D$ and we can take $n\ra \infty$) and Conjecture~\ref{C:CL} is a statement about the distribution of the cokernels of the random matrices $R_D$.

One might thus imagine that there could be some sense in which the $R_D$ become equidistributed with respect to Haar measure, and that this would imply Conjecture~\ref{C:CL}.  However, in this paper we show that in fact having cokernels distributed according to Cohen and Lenstra's conjectured distribution of class groups is a rather robust feature of random matrix regimes, and so much weaker statements (than Haar equidistribution) about the distribution $R_D$ would also imply Conjecture~\ref{C:CL}.  

As a particular example, if we take $X_n\in M_{n\times n}(\Z_p)$ whose entries are  independent and $0$ with probability $q$ and $1$ with probability $1-q$, then (for any $q$ and) for every $p$, we have Equation~\eqref{E:FW} with $H_n$ replaced by $X_n$.  These $X_n$, with their entries concentrated in $\{0,1\}$, are nowhere near Haar equidistributed in any $\Z_p$, yet they still have the same cokernel distributions as Haar equidistributed random matrices.  More generally, we have the following.

\begin{theorem}\label{T:MainIntro}
Let $p$ be a prime and $\epsilon>0$, and for each $n$ let $X_n \in M_{n\times n} (\Z_p)$ be a random matrix with independent entries.  Further, for any entry $(X_n)_{i,j}$ and any  $r\in \Z/p\Z$, we require that $\P((X_n)_{i,j} \equiv r \pmod{p})\leq 1-\epsilon$.  Then for any finite abelian $p$-group $B$,
\begin{equation*}
\lim_{n\ra\infty} \P(\cok(X_n) \isom B) =\frac{\prod_{k=1}^\infty (1-p^{-k})}{|\Aut(B)|}.
\end{equation*}
\end{theorem}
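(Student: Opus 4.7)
The plan is to compute the surjection moments $\E(\#\Sur(\cok(X_n), A))$ for each finite abelian $p$-group $A$, show that they all converge to $1$ as $n \to \infty$, and then invoke a moment-determines-distribution theorem for random finite abelian $p$-groups. Since the Cohen--Lenstra measure is characterized by having all of its surjection moments equal to $1$, convergence of moments will give convergence of the law of $\cok(X_n)$ to the Cohen--Lenstra distribution.

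The first step is the standard identity
\[ \E(\#\Sur(\cok(X_n), A)) = \sum_{F \in \Sur(\Z_p^n, A)} \P(F \circ X_n = 0), \]
which reduces the problem to counting, for each surjection $F$, the probability that $X_n$ factors through $\ker F$. By independence of the columns $C_1, \ldots, C_n$ of $X_n$, this probability factors as $\prod_j \P(F(C_j) = 0)$. Fourier analysis on $A$ then yields
\[ \P(F(C_j) = 0) = |A|^{-1} \sum_{\chi \in \Hom(A, \C^*)} \prod_{i=1}^n \E\bigl(\chi(F_i)^{(X_n)_{i,j}}\bigr), \]
where $F_i := F(e_i) \in A$. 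The key analytic input is an anticoncentration bound: for fixed $A$ and $\epsilon > 0$, there exists $\delta = \delta(A,\epsilon) > 0$ so that whenever $\chi(F_i) \neq 1$ and $v \in \Z_p$ has any $\epsilon$-balanced-mod-$p$ distribution, $|\E(\chi(F_i)^v)| \leq 1 - \delta$. This holds because $\chi(F_i)$ is a root of unity of order dividing $\exp(A)$, and the balanced-mod-$p$ hypothesis forces at least two distinct residues mod $p$ to carry probability $\geq \epsilon/p$, producing strict cancellation in the character sum.

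The trivial character $\chi = 1$ contributes the main term $|A|^{-n}|\Sur(\Z_p^n, A)|$, which tends to $1$ using the standard asymptotic $|\Sur(\Z_p^n, A)| \sim |A|^n$. To control the nontrivial-character contributions, I would partition surjections $F$ by the profile $n_\chi(F) := \#\{i : F_i \notin \ker\chi\}$: the character factor is bounded by $(1-\delta)^{n_\chi(F)}$, while for each proper subgroup $H \subsetneq A$ the number of $F$ having at most $m$ coordinates outside $H$ is $O\bigl(\binom{n}{m}|A|^m |H|^{n-m}\bigr)$. Summing these contributions geometrically over $m$, and over nontrivial characters $\chi$ and proper subgroups $H$ via inclusion-exclusion on the subgroup lattice of $A$, drives the error to $0$ as $n \to \infty$.

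The main obstacle is this last combinatorial bookkeeping: the character bound $(1-\delta)^{n_\chi(F)}$ only decays when many $F_i$ lie outside $\ker \chi$, so one must show uniformly that surjections anomalously concentrated inside any proper subgroup $H$ are sufficiently rare to compensate for the weakened character bound they afford when $\chi$ is trivial on $H$. Carrying this out across all pairs $(\chi,H)$ simultaneously, and ensuring the bounds are uniform in the underlying choice of entry distributions $\mu_{i,j}$ (subject only to the $\epsilon$-balanced hypothesis), is the technical heart of the argument.
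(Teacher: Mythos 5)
Your high-level strategy is the same as the paper's: compute the surjection moments $\E(\#\Sur(\cok(X_n),A))$, show they converge to $1$, and conclude via a moments-determine-distribution theorem. The column-factoring, the Fourier expansion over $\Hom(A,\C^*)$, and the anticoncentration bound $|\E(\chi(F_i)^v)| \le 1-\delta$ (which is Lemma~\ref{L:singleest} in the paper) are all identical. The paper handles the "code" case exactly as you describe. So the skeleton is right.

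The gap is in the combinatorial bookkeeping you flag as "the technical heart." Your bound for a surjection $F$ concentrated (except for $m$ coordinates) in a proper subgroup $H$ of index $D$ gives, per column,
\[
\P(F(C_j)=0) \;\le\; |A|^{-1}\Bigl[\,1 + (D-1)(1-\delta) + (|A|-D)(1-\delta)^{\Omega(n)}\,\Bigr]
\;\approx\; \frac{D}{|A|}\Bigl(1 - \frac{(D-1)\delta}{D}\Bigr),
\]
where $\delta$ is the anticoncentration constant. When you multiply the $n$-th power of this by the count $O\bigl(\binom{n}{m}|A|^m|H|^{n-m}\bigr)$ and restrict to $m \lesssim \delta' n$ (some code-distance parameter $\delta'$), the $|H|^{n-m}$ cancels against $(D/|A|)^n$ and you are left with roughly $\binom{n}{\delta' n}\,D^{\delta' n}\,(1-c\delta)^n$. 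The entropy term $\binom{n}{\delta' n}$ contributes $\exp(\Theta(\delta'\log(1/\delta')\,n))$, and you cannot take $\delta' \to 0$ to kill it because the compensating decay $(1-c\delta)^n$ is itself only $\exp(-\Theta(\delta\,n))$, with both small parameters of the same order. The sum over non-code $F$ therefore does \emph{not} tend to zero with this bound.

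The paper's Lemma~\ref{L:probdepthestimatecolumn} supplies the missing ingredient: a per-column factor of $(1-\epsilon)$ that is a \emph{fixed} constant, independent of the code-distance parameter. It does not come from the character sum at all; it comes from a direct probabilistic observation. Since $F$ is surjective but $F(V_{\setminus\sigma})=H$ is proper, some $F_i$ with $i\in\sigma$ is nonzero in $A/H$, so conditioning on the other coordinates gives $\P\bigl(\sum_{i\in\sigma}F(v_i)X_i \in H\bigr) \le 1-\epsilon$ directly from the $\epsilon$-balanced hypothesis. This fixed $(1-\epsilon)^n$ dominates the entropy from $\binom{n}{\delta' n}D^{\ell(D)\delta' n}$ once $\delta'$ is taken small. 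Two further points you'd need to fill in: (i) handling \emph{nested} subgroups requires more than a single pair $(\chi, H)$ — the paper's notion of \emph{depth} (the maximal index $D=[G:F(V_{\setminus\sigma})]$ achievable with $|\sigma|<\ell(D)\delta n$) organizes the inclusion-exclusion you allude to, with the number-of-prime-factors weight $\ell(D)$ being essential; (ii) the moments-determine-distribution step is not automatic, since $\E(\#\Sur(\cdot,G))$ can grow like $p^{k^2/2}$, far too fast for Carleman-type criteria — the paper invokes a tailored result (Theorem~\ref{T:MomDetDetail}, from \cite{Wood2014a}) that applies precisely when $M_G \le |\wedge^2 G|$, which you would need to cite or reprove.
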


Note that the matrix entries are not required to be identically distributed and can vary with $n$.  Of course, some condition that the matrix entries are not too concentrated, like $\P((X_n)_{i,j} \equiv r \pmod{p})\leq 1-\epsilon$, is certainly necessary, since if the matrices had even two rows whose values were all $r \pmod{p},$ then $\cok(X_n)$ could never be the trivial group.

In fact, in Corollary~\ref{C:Main}, we prove a statement about random integral matrices that implies Theorem~\ref{T:MainIntro}, determining not only the Sylow $p$-subgroups of their cokernels a single $p$, but rather the Sylow $p$-subgroups of their cokernels simultaneously for any finite set of $p$, and we see (as Cohen and Lenstra \cite{Cohen1984} predict for class groups) that the Sylow $p$-subgroups for different $p$ are independent.

Of course, the independence of the matrix entries in Theorem~\ref{T:MainIntro} in a significant hypothesis (and  not true in such a form for class groups), and one might wonder to what extent it is necessary.  In \cite{Wood2014}, it is shown that if one takes the matrices $X_n$ symmetric, but with otherwise independent entries, their cokernels  
have a \emph{different}
 distribution that that in Theorem~\ref{T:MainIntro}.  The work in that paper was to determine the distribution of Jacobians (a.k.a. sandpile groups) of random graphs, which are a more accessible analogue of class groups.  That application also required dealing with the fact that each diagonal entry of the relevant matrix (the graph Laplacian) is dependent on all the entries in its column, and this ``small'' dependence of the diagonal did not have an effect on the cokernel distribution.  

In fact, Cohen and Lenstra \cite{Cohen1984} also make conjectures about class groups of real quadratic (and other totally real abelian) number fields.  In particular, if $S^+_X$ is the set of positive fundamental discriminants $D\leq X$, they conjecture
$$
\lim_{X \ra\infty} \frac{\#\{D\in S^+_X \,|\, \operatorname{Cl}(\Q(\sqrt{D})_p \isom B \} }{|S^-_X|} =\frac{\prod_{k=1}^\infty (1-p^{-k-1})}{|B||\Aut(B)|}.
$$
We see from equation~\eqref{E:casm} that these class groups are cokernels of $n\times (n+1)$ matrices, since $\cO_S^*$ will have rank $|S|+1$ when the number field $K$ is real quadratic.  We in fact prove the following, which follows from Corollary~\ref{C:Main}.

\begin{theorem}
\label{T:u}
For any $u\geq 0$,  for random $X_n\in M_{n\times (n+u)}(\Z_p)$ with entries as in  Theorem~\ref{T:MainIntro}, 
\begin{equation*}
\lim_{n\ra\infty} \P(\cok(X_n) \isom B) =\frac{\prod_{k=1}^\infty (1-p^{-k-u})}{|B|^u|\Aut(B)|}.
\end{equation*}
\end{theorem}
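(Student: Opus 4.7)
The natural approach is the \emph{moment method} for random abelian groups: we determine the limiting distribution of $\cok(X_n)$ by computing the surjection moments $M_H(n) := \E[\#\Sur(\cok(X_n), H)]$ for every finite abelian $p$-group $H$, and then invoking a moment-to-distribution theorem. The target distribution $\mu_u(B) = \prod_{k=1}^\infty (1-p^{-k-u})/(|B|^u |\Aut(B)|)$ is the unique distribution on finite abelian $p$-groups whose $H$-th surjection moment equals $|H|^{-u}$ for every $H$, so it suffices to show $\lim_{n\to\infty} M_H(n) = |H|^{-u}$.

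To compute $M_H(n)$, regard $X_n$ as a linear map $\Z_p^{n+u} \to \Z_p^n$. A surjection $\cok(X_n) \twoheadrightarrow H$ is the same as a surjection $\phi \colon \Z_p^n \twoheadrightarrow H$ with $\phi \circ X_n = 0$. Identifying $\phi$ with an $n$-tuple $(h_1, \dots, h_n) \in H^n$ whose entries generate $H$, the condition $\phi \circ X_n = 0$ says $\sum_{i=1}^n h_i (X_n)_{i,j} = 0$ in $H$ for every column $j = 1, \dots, n+u$. Because the entries, and hence the columns, of $X_n$ are independent, this factorizes as
$$
M_H(n) = \sum_{\phi \in \Sur(\Z_p^n,\, H)} \prod_{j=1}^{n+u} \P\!\left(\sum_{i=1}^n h_i (X_n)_{i,j} = 0 \text{ in } H\right).
$$
The main technical input, which would be proved once and for all in the body (and packaged in Corollary~\ref{C:Main} for both the $u=0$ and $u \geq 1$ cases), is a Fourier/character estimate showing that for most $\phi$ the per-column factor is close to $|H|^{-1}$. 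One expands the indicator via characters of $H$, and the hypothesis $\P((X_n)_{i,j} \equiv r \pmod p) \leq 1-\epsilon$ forces the non-trivial character sums to decay except when $(h_1, \dots, h_n)$ lies in a ``bad'' sublattice (a code of small weight relative to the characters of $H$). The bad $\phi$ must be counted and shown to contribute negligibly, while each good $\phi$ contributes $\approx |H|^{-(n+u)}$; since the number of good surjections is $\approx |H|^n$, the sum is $\approx |H|^{-u}$.

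The hard part is the uniform control over non-generic $\phi$ and the propagation of errors through $n+u$ column factors: since entries are not identically distributed and can vary with $n$, the Fourier estimates must depend only on $\epsilon$ and on lattice/weight statistics of $(h_1, \dots, h_n)$, and the error in a single column factor must be small enough that the product over $n+u$ columns is still well-approximated by $|H|^{-(n+u)}$. The case $u \geq 1$ is in fact slightly easier than $u = 0$ in that we have $u$ extra column factors of size $\leq 1$ reducing the total, and because $M_H(n) \to |H|^{-u}$ is bounded, standard moment convergence arguments apply. Once the moments are pinned down, the moment-to-distribution result identifies the limit as $\mu_u$, proving Theorem~\ref{T:u}.
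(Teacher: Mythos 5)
Your proposal takes essentially the same route as the paper: compute the surjection moments $\E[\#\Sur(\cok(X_n),H)]$ by expanding over surjections $\phi\colon\Z_p^n\to H$, factor the probability $\P(\phi\circ X_n=0)$ over the independent columns, use Fourier/character estimates under the $\epsilon$-balance hypothesis to show good $\phi$ contribute $\approx|H|^{-(n+u)}$ and bad $\phi$ are negligible, and then pass from moments to the distribution. Two small caveats: (i) your remark that ``standard moment convergence arguments apply'' understates the difficulty --- the paper notes that Carleman-type criteria fail because the $k$th moment grows like $p^{k^2/2}$, so a purpose-built uniqueness theorem for these specific moments (Theorem~\ref{T:MomDetDetail}) is required; and (ii) the bookkeeping of ``bad'' $\phi$ cannot be done with a single good/bad dichotomy --- the paper needs a graded stratification of $\Hom(V,G)$ by \emph{depth} (tracking the index $[G:F(V_{\setminus\sigma})]$ over subsets $\sigma$ of small size) so that the count of bad $F$ at each level is matched against the corresponding per-column probability bound; a coarser ``code vs.\ non-code'' split does not give the cancellation you need.
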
 
These distributions on finite abelian groups for other $u$ also arise in the general theory Cohen and Lenstra build to formulate their conjectures.

While the results of this paper are particularly notable for their connection to the Cohen Lenstra heuristics, the proofs in this paper and the history of previous work lie in the fields of additive combinatorics and probability.  For $X_n\in M_{n\times n}(\Z_p)$, then $\cok(X_n)$ is trivial if and only if $X_n$ is a non-singular matrix when reduced mod $p$.  More generally, the corank of the matrix mod $p$ is the rank of the cokernel.  There is a long history of work on singularity and ranks of the random matrices we consider above mod $p$,
 including results of Kozlov \cite{Kozlov1966}, Kovalenko and Levitskaja  \cite{Kovalenko1975a}, Charlap, Rees, and Robbins \cite{Charlap1990} (first proving Theorem~\ref{T:MainIntro} in the case that $B$ is the trivial group),  Kahn and Koml\'{o}s  \cite{Kahn2001}, and Maples \cite{Maples2010}  for
 for general $p$.
However, even our result on ranks (Corollary~\ref{C:prank}), that for $X_n$ as in Theorem~\ref{T:MainIntro}, 
$$
\lim_{n\ra \infty} \P(\operatorname{rank}(X_n)=n-k)=p^{-k^2}\prod_{i=1}^{k} (1-p^{-i})^{-2} \prod_{i\geq 1} (1-p^{-i}) 
$$
appears to be new with our hypotheses.  The realization that the cokernel distribution, and not just the ranks, should also be insensitive to the distributions of the entries of the matrices is due to Tao and Maples (see \cite{Maples2013} for some interesting work towards Theorem~\ref{T:MainIntro}).

This robustness of certain statistics of random matrices under changes to the entry distribution of the matrices is an important theme in the study of random matrices and is called \emph{universality} of those statistics.  
For example, the best upper bounds on the singularity probability of discrete random matrices with independent entries in characteristic $0$, due to Bourgain, Vu, and Wood \cite{Bourgain2010}, are insensitive to the actual values the entries take (as long as they are not too concentrated).  

To prove our main result, we first determine the moments of the cokernel distributions, and from that determine the distributions themselves.  Our specific approach was developed in \cite{Wood2014} for the case of symmetric matrices.  In this paper, we are able to use a much simplified version of that in \cite{Wood2014} since our matrices have all their entries independent.  To find the moments $\E(\#\Sur(\operatorname{cok}(X_n),G))$,  we prove inverse Littlewood-Offord theorems (Lemmas~\ref{L:Fcodecolumn} and \ref{L:probdepthestimatecolumn}).  These both say that if values of several linear functions of our $n$ independent variables are not close to equidistributed, then the linear functions are close to having extra structure. The extra structure is analogous to having a linear dependence in rows of a matrix after deleting a small number of columns, but since our linear algebra is not always over a field there are many layers to the type of dependence we can have, which are captured by our notion of \emph{depth}.  
Inverse Littlewood-Offord Theorems 
are a key component in the most recent work on singularity probability of discrete random matrices in characteristic $0$, both \cite{Bourgain2010} mentioned above and the earlier work of Tao and Vu \cite{Tao2007}.  
(See the papers of Tao and Vu \cite{Tao2010b} and Nguyen and Vu for the most recent \cite{Nguyen2011} inverse Littlewood-Offord Theorems in characteristic $0$, as well as a guide to the extensive previous work on the problem.)  
However, there are significant differences in the actual mathematics of these theorems in characteristic $0$ versus  characteristic $p$, since in characteristic $0$ one doesn't expect any kind of equidistribution, but rather just a good upper bound on the probabilities.  Maples  \cite{Maples2013} proves a Littlewood-Offord Theorem in characteristic $p$ that is not strong enough for our purposes for fixed $p$, but does have the advantage of uniformity in $p$.

To finally determine our cokernel distribution from the moments, we can't rely on the usual probabilistic methods such as Carleman's condition (since our moments are too big--our $k$th moment is of order $p^{k^2/2}$).  However, we use a specifically tailored result \cite{Wood2014} that in our cases shows that that moments we obtain determine a unique distribution.  This  situation of needing to show fast growing moments of random abelian groups determine the distribution of the groups has arisen before in number theory, both in Cohen-Lenstra problems, e.g. in the work of  Fouvry and Kl\"{u}ners \cite{Fouvry2006} and Ellenberg, Venkatesh, and Westerland \cite{Ellenberg2009}, and in a related problem about Selmer groups in work of Heath-Brown \cite{Heath-Brown1994}.  Ellenberg, Venkatesh, and Westerland \cite{Ellenberg2009} make progress towards proving the function field analogue of the Cohen-Lenstra heuristics by proving new homological stability theorems that determine some of the  moments of the relevant class groups.

\subsection{Further notation}\label{S:not}
We use $[n]$ to denote $\{1,\dots,n\}$. We write $\Hom(A,B)$ and  $\Sur(A,B)$ for the set of homomorphisms and surjections, respectively, from $A$ to $B$.  We write $\P$ for probability and $\E$ for expected value. 
We write $\exp(x)$ for the exponential function $e^x$.

Throughout the paper, we let $a$ be a positive integer and let $R=\Z/a\Z$.  We then study finite abelian groups $G$ whose exponent divides $a$, i.e. $aG=0$.  We write $G^*$ for $\Hom(G,R)$.

\section{Finding the moments}\label{S:mom}
We will study integral matrices by reducing them mod $a$ for all $a$ (and analogously matrices over $\Z_p$ by reducing them mod $p^k$ for all $k$).
For the rest of the paper, let $a$ be a positive integer and let $R=\Z/a\Z$.  We change the notation for our random matrix  from the introduction to make it easier to read our proof.  
Let $M$ be a random $n\times (n+u)$ matrix with entries in $R$.  We let $M_1,\dots, M_{n+u}\in R^n$ be the columns of $M$, and $m_{ij}$ the entries of $M$ (so that the entries of $M_j$ are $m_{ij}$).  We let $\epsilon>0$.

The following definition captures the two hypotheses of Theorem~\ref{T:MainIntro}: independence of entries and entries not too concentrated.

\begin{definition}
A random variable $y$ in a ring $T$ is \emph{$\epsilon$-balanced} if for every maximal ideal $\wp$ of $T$ 
and $r \in T/\wp$ we have $\P(y \equiv r \pmod{\wp})\leq 1-\epsilon$ (e.g., 
$y\in R$ is \emph{$\epsilon$-balanced} if for every prime $p\mid a$ and $r \in \Z/p\Z$ we have $\P(y \equiv r \pmod{p})\leq 1-\epsilon$).
A random vector or matrix is \emph{$\epsilon$-balanced} if its entries are independent and $\epsilon$-balanced
\end{definition}

We let $V=R^n$ with basis $v_i$ and $W=R^m$ with basis $w_j$. 
Note for $\sigma\sub [n]$, $V$ has distinguished submodules $V_{\setminus \sigma}$ generated by the $v_i$ with $i\not\in\sigma$. (So $V_{\setminus \sigma}$ comes from not using the $\sigma$ coordinates.)
 We view $M\in \Hom(W,V)$ and it's columns $M_j$ as elements of $V$ so that
$M_j=Mw_j=\sum_i m_{ij}v_i. $  Let $G$ be a finite abelian group with exponent dividing $a$.
We have $\cok M=V/MW$.
To investigate the moments 
$
\E(\#\Sur(\cok M, G)),
$
we recognize that each such surjection lifts to a surjection $V\ra G$ and so we have
\begin{equation}\label{E:expandF}
\E(\#\Sur(\cok M, G))=\sum_{F\in \Sur(V,G)}  \P(F(MW)=0).
\end{equation}
If $M$ is $\epsilon$-balanced, then by the independence of columns, we have
$$
\P(F(MW)=0)=\prod_{j=1}^m \P(F(M_j)=0).
$$
So we aim to estimate these probabilities $\P(F(M_j)=0)$.
We will first estimate these for the vast majority of $F$, which satisfy the following helpful property.  

\begin{definition}
We say that $F\in \Hom(V,G)$ is a \emph{code} of distance $w$, if for every $\sigma\sub [n]$ with $|\sigma|<w$, we have $FV_{\setminus \sigma}=G$.
In other words, $F$ is not only surjective, but would still be surjective if we throw out (any) fewer than $w$ of the standard basis vectors from $V$.  (If $a$ is prime so that $R$ is a field, then this is equivalent to whether the transpose map $F: G^* \ra V^*$ is injective and has image $\im(F)\sub V^*$ a linear code of distance $w$, in the usual sense.)
\end{definition}

\begin{lemma}\label{L:Fcodecolumn}
Let $R$ and $G$ be as above.   Let $\epsilon >0$ and $\delta>0$.
Let $X$ be an $\epsilon$-balanced  random vector in $V$.
 Let $F\in \Hom(V,G)$ be a code of distance $\delta n$ and $A\in G$.
   For all $n$ we have
$$
\left |  \P(FX=A) - |G|^{-1} \right| \leq \exp(-\epsilon\delta n/a^2).
$$
\end{lemma}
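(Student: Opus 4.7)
The plan is to estimate $\P(FX=A)$ by Fourier inversion on $G$. Since $aG=0$, every character $\chi\colon G\to\C^\times$ takes values in $\mu_a$, and
\[
\P(FX=A)=\frac{1}{|G|}\sum_{\chi\in\hat G}\overline{\chi(A)}\,\E[\chi(FX)] = \frac{1}{|G|} + \frac{1}{|G|}\sum_{\chi\neq 1}\overline{\chi(A)}\,\E[\chi(FX)].
\]
Because $X$ has independent coordinates $X_1,\dots,X_n$ and $FX=\sum_i X_i f_i$ where $f_i:=F(v_i)$, the expectation factors as $\E[\chi(FX)]=\prod_{i=1}^n \E[\chi(f_i)^{X_i}]$. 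So it suffices to show that for every non-trivial $\chi$,
\[
\prod_{i=1}^n \bigl|\E[\chi(f_i)^{X_i}]\bigr| \;\leq\; \exp(-\epsilon\delta n/a^2),
\]
and then the triangle inequality over the $|G|-1$ non-trivial characters gives the claim.

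For the single-variable bound, fix $i$ and set $\omega:=\chi(f_i)\in\mu_a$. If $\omega=1$ the factor equals $1$. Otherwise let $d>1$ be its order, so $d\mid a$. The key identity $|\E[\omega^{X_i}]|^2 = \E[\omega^{X_i-X_i'}]$ (with $X_i'$ an independent copy) expresses $|\E[\omega^{X_i}]|^2$ as a nonnegative real, and one has
\[
1 - |\E[\omega^{X_i}]|^2 \;=\; 2\sum_{s\in\Z/d\Z}\P(X_i-X_i'\equiv s\bmod d)\,\sin^2(\pi s/d) \;\geq\; \frac{8}{d^2}\,\P(X_i\not\equiv X_i'\bmod d),
\]
using $\sin^2(\pi s/d)\geq 4/d^2$ for $s\not\equiv 0$. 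Picking any prime $p\mid d$ (so $p\mid a$) and writing $p_r:=\P(X_i\equiv r\bmod p)$, the $\epsilon$-balanced hypothesis gives $\max_r p_r\leq 1-\epsilon$, so $\P(X_i\equiv X_i'\bmod p)=\sum_r p_r^2\leq 1-\epsilon$; the event $X_i\equiv X_i'\bmod d$ is contained in this, so $\P(X_i\not\equiv X_i'\bmod d)\geq \epsilon$. Therefore $|\E[\omega^{X_i}]|^2\leq 1-8\epsilon/a^2$, and taking square roots, $|\E[\omega^{X_i}]|\leq \exp(-\epsilon/a^2)$ (with room to spare in the constant).

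It remains to count the indices where the factor is nontrivial. Set $I_\chi:=\{i:\chi(f_i)\neq 1\}$. Then for $\sigma:=I_\chi$, every generator of $F(V_{\setminus\sigma})$ lies in $\ker\chi$, so $F(V_{\setminus\sigma})\subseteq\ker\chi\subsetneq G$. The code hypothesis then forces $|\sigma|\geq\delta n$, i.e.\ $|I_\chi|\geq \delta n$. Combining,
\[
\prod_i |\E[\chi(f_i)^{X_i}]| \;\leq\; \exp(-\epsilon|I_\chi|/a^2) \;\leq\; \exp(-\epsilon\delta n/a^2),
\]
and putting this into the Fourier-inversion expression yields the stated bound.

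The only real subtlety is the first step: getting the single-variable bound with the correct $1/a^2$ dependence. Naively applying the $\epsilon$-balanced hypothesis only controls $X_i\bmod p$ while $\omega^{X_i}$ depends on $X_i\bmod d$, so one must notice that reducing mod $d$ or mod $a$ preserves the inclusion of events $\{X_i\equiv X_i'\bmod d\}\subseteq\{X_i\equiv X_i'\bmod p\}$ and use the symmetrized sum $\E[\omega^{X_i-X_i'}]$ to evade the complex-valued nature of $\E[\omega^{X_i}]$ itself. Once that is set up, everything is clean.
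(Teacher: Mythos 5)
Your proof is correct and follows the same overall strategy as the paper: Fourier inversion over $\hat G$, the code condition forcing at least $\delta n$ indices $i$ with $\chi(F(v_i))\neq 1$, a single-character bound of $\exp(-\epsilon/a^2)$ at each such index, and a final triangle inequality over the $|G|-1$ nontrivial characters. The one place you diverge is in the single-character estimate, which the paper simply cites (its Lemma~\ref{L:singleest}, proved in \cite{Wood2014a} by an extremal ``worst case is mass $1-\epsilon$ on one $a$th root of unity and the rest on an adjacent one'' argument). You instead give a self-contained symmetrization proof: write $|\E[\omega^{X_i}]|^2=\E[\omega^{X_i-X_i'}]$, expand the real part as $2\sum_s \P(X_i-X_i'\equiv s\bmod d)\sin^2(\pi s/d)$, bound $\sin^2$ below by $4/d^2$, and reduce mod a prime $p\mid d$ to import the $\epsilon$-balanced hypothesis. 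This is a standard and clean alternative; it gives $|\E[\omega^{X_i}]|\le\exp(-4\epsilon/a^2)$, more than enough. (Minor nit: $\omega$ need not equal $e^{2\pi i/d}$, so the phases are $\pi k s/d$ for some $k$ coprime to $d$; but since multiplication by $k$ permutes $\Z/d\Z$ and fixes $0$, the lower bound $\sin^2\ge 4/d^2$ on nonzero residues still applies, so your bound stands.) Both routes land on exactly the quantitative form needed, with yours having the small advantage of not outsourcing the sublemma.
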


Let $\zeta$ be a primitive $a$th root of unity.
To prove Lemma~\ref{L:Fcodecolumn}, we will use the discrete Fourier transform and the following basic estimate.

\begin{lemma}\label{L:singleest}
Let $y$ be an entry of an $\epsilon$-balanced random variable in $R$, and let $m$ be an integer such that $\zeta^m\neq 1$.  Then  $|\E(\zeta^{ my})|\leq \exp(-\epsilon /a^2)$.
\end{lemma}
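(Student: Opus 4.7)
The plan is to symmetrize: I would introduce an independent copy $y'$ of $y$, giving $|\E(\zeta^{my})|^2 = \E(\zeta^{m(y-y')})$, and since $y-y'$ is symmetric about $0$ in $\Z/a\Z$ this expectation is real, so that
$$1 - |\E(\zeta^{my})|^2 = 2\,\E\!\left(\sin^2\!\left(\tfrac{\pi m(y-y')}{a}\right)\right).$$
It then suffices to show that the right-hand side is at least a constant multiple of $\epsilon/a^2$.

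Let $d = a/\gcd(m,a)$ denote the order of $\zeta^m$, so that $d > 1$; I would pick a prime $p \mid d$ (which necessarily divides $a$) and exploit the $\epsilon$-balanced hypothesis at $p$. The first step is to lower-bound the probability that $y \not\equiv y' \pmod d$. Every residue class modulo $d$ is contained in a single residue class modulo $p$, so
$$\max_{r \in \Z/d\Z} \P(y \equiv r \pmod d) \leq \max_{r \in \Z/p\Z} \P(y \equiv r \pmod p) \leq 1 - \epsilon,$$
and the independence of $y$ and $y'$ then gives $\P(y \equiv y' \pmod d) = \sum_r \P(y \equiv r \pmod d)^2 \leq 1-\epsilon$, whence $\P(d \nmid (y-y')) \geq \epsilon$.

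The second step is to show that on the event $d \nmid (y-y')$ the sine-squared factor is at least $4/a^2$. Writing $m = \gcd(m,a) m'$ so that $m/a = m'/d$ with $\gcd(m', d) = 1$, one has $\pi m(y-y')/a = \pi m'(y-y')/d$; when $d \nmid (y-y')$ the residue $t := m'(y-y') \bmod d$ lies in $\{1, \ldots, d-1\}$, so the standard estimate $\sin(x) \geq 2x/\pi$ on $[0,\pi/2]$ together with $d \geq 2$ yields
$$\sin^2\!\left(\tfrac{\pi m (y-y')}{a}\right) = \sin^2\!\left(\tfrac{\pi t}{d}\right) \geq \sin^2\!\left(\tfrac{\pi}{d}\right) \geq \tfrac{4}{d^2} \geq \tfrac{4}{a^2}.$$
Combining the two steps gives $\E(\sin^2(\pi m(y-y')/a)) \geq 4\epsilon/a^2$, hence $|\E(\zeta^{my})|^2 \leq 1 - 8\epsilon/a^2 \leq \exp(-8\epsilon/a^2)$, from which the desired bound $|\E(\zeta^{my})| \leq \exp(-\epsilon/a^2)$ follows. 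The main obstacle in this plan is identifying the correct quantity to control, namely the probability that two independent copies of $y$ coincide modulo the order $d$ of the character $\zeta^m$; once this reduction is in place, everything else is an elementary trigonometric estimate.
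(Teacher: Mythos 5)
Your proof is correct. The paper does not give a self-contained argument here: it cites Lemma 4.1 of [Woo14b] and offers only a one-sentence sketch of a direct extremal argument, namely that the worst case for $|\E(\zeta^{my})|$ occurs when the distribution places mass $1-\epsilon$ on one $a$th root of unity and the remaining mass $\epsilon$ on an adjacent root, after which the bound follows from an explicit computation with that two-point law. Your route is structurally different: you symmetrize to get $|\E(\zeta^{my})|^2 = \E(\zeta^{m(y-y')})$, pass to $1-|\E(\zeta^{my})|^2 = 2\E\bigl(\sin^2(\pi m(y-y')/a)\bigr)$, and then lower-bound the right side. The two ingredients of that lower bound are handled cleanly: reducing modulo the order $d$ of $\zeta^m$ and choosing a prime $p\mid d$ is exactly the right way to invoke the $\epsilon$-balanced hypothesis (every residue class mod $d$ refines one mod $p$, so the collision probability $\P(y\equiv y'\pmod d)\le 1-\epsilon$), and Jordan's inequality $\sin x\ge 2x/\pi$ on $[0,\pi/2]$ gives the pointwise bound $\sin^2(\pi t/d)\ge 4/d^2\ge 4/a^2$ on the non-collision event. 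Your final constant $\exp(-4\epsilon/a^2)$ is in fact slightly stronger than the stated $\exp(-\epsilon/a^2)$. Compared with the paper's extremal sketch, your symmetrization argument avoids having to identify and justify a worst-case distribution (which, given that the constraint lives mod $p$ while the character lives mod $d$, is not entirely trivial), at the cost of losing a factor of two from squaring; it also has the virtue of being fully self-contained rather than deferring to an external lemma.
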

\begin{proof}
This is proven in  \cite[Proof of Lemma 4.1]{Wood2014a}.  Briefly, the longest $|\E(\zeta^{ y})|$ could
be was if $\zeta^{y}$ was one $a$th root of unity $1-\epsilon$ of the time, and a consecutive (around the unit circle) $a$th root of unity the rest of the time.
\end{proof}

\begin{proof}[Proof of Lemma~\ref{L:Fcodecolumn}]
 We have, by the discrete Fourier transform,
$$
\P(FX=A)= |G|^{-1}\sum_{C\in G^*} \E(\zeta^{C(FX-A)})=|G|^{-1}+|G|^{-1} \sum_{C\in G^*\setminus \{0\}} \E (\zeta^{C(-A)})\prod_{1\leq i\leq n} \E(\zeta ^{C(v_i)X_{i}}  ).
$$
Since $C\ne 0$ and $F$ is a code, there must be at least $\delta n$ values of $i$ such that $F(v_i)\not\in \ker C$. So using Lemma~\ref{L:singleest}, we have
$$
\left |  \P(FX=A) - |G|^{-1} \right| =\left|  \E (\zeta^{C(-A)})\prod_{1\leq k\leq n}  \E(\zeta ^{C(v_i)X_{i}}  ) \right| \leq \exp(-\epsilon\delta n/a^2).
$$
\end{proof}

We then put these estimates for columns together using a simple inequality.

\begin{lemma}\label{L:1tom}
If we have integer $m\geq 2$ and  real numbers $x\geq 0$ and $y$ such that $|y|/x\leq 2^{1/(m-1)}-1$ and $x+y\geq 0$, then
$$
x^m-2mx^{m-1}|y| \leq (x+y)^m\leq x^m+2mx^{m-1}|y|.
$$
\end{lemma}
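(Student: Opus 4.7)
The plan is to rescale the inequality to a one-parameter form and then separate the cases $y \geq 0$ and $y < 0$.

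First I would dispose of the degenerate case $x = 0$: the hypothesis $|y|/x \leq 2^{1/(m-1)} - 1$ forces $y = 0$, and both inequalities become trivial equalities. For $x > 0$, I would substitute $t = y/x$ and divide through by $x^m$, reducing the claim to: whenever $|t| \leq 2^{1/(m-1)} - 1$ and $1 + t \geq 0$, one has
$$
1 - 2m|t| \;\leq\; (1+t)^m \;\leq\; 1 + 2m|t|.
$$
The point of the hypothesis on $|t|$ is that it is exactly the value for which $(1+t)^{m-1} = 2$, which is what will make the constant $2m$ come out right.

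For $t \geq 0$, the lower bound is trivial since $(1+t)^m \geq 1$. For the upper bound, set $f(s) = (1+s)^m$; then $f'(s) = m(1+s)^{m-1}$ is increasing, and $f'(2^{1/(m-1)} - 1) = m \cdot 2 = 2m$. Hence on $[0,t] \subseteq [0, 2^{1/(m-1)} - 1]$ we have $f'(s) \leq 2m$, so
$$
(1+t)^m - 1 \;=\; \int_0^t f'(s)\,ds \;\leq\; 2m t,
$$
which gives $(1+t)^m \leq 1 + 2m|t|$.

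For $t < 0$, the constraint $1 + t \geq 0$ (together with $|t| \leq 2^{1/(m-1)} - 1 \leq 1$) places $t \in [-1, 0]$, so $(1+t)^m \leq 1 \leq 1 + 2m|t|$, which handles the upper bound trivially. For the lower bound, Bernoulli's inequality (valid for $1 + t \geq 0$ and integer $m \geq 1$) gives $(1+t)^m \geq 1 + mt \geq 1 - 2m|t|$. Combining the two cases yields the claim, which after multiplying back by $x^m$ is the stated lemma. No step here poses a real obstacle; the only thing to get right is noticing that $2^{1/(m-1)} - 1$ is precisely the threshold at which $f'$ reaches $2m$, and that Bernoulli's inequality handles the negative side with room to spare.
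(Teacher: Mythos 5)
Your proof is correct and follows essentially the same route as the paper: normalize to $x=1$ by homogeneity, split on the sign of $y$ (your $t$), and for $t\geq 0$ observe that the threshold $2^{1/(m-1)}-1$ is exactly where $\frac{d}{ds}(1+s)^m$ reaches $2m$. The only cosmetic difference is that for $t<0$ you invoke Bernoulli's inequality for the lower bound, whereas the paper runs the same derivative comparison on the other side; both are immediate.
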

\begin{proof}  
We  can assume $x=1$ by homogeneity.  We divide into two cases based on the sign of $y$.
Then note the middle and right hand expressions are equal when $y=0$ and the derivative of the middle is at most the derivative of the right when $0\leq y\leq 2^{1/(m-1)}-1$.  A similar argument when $-1\leq y\leq 0$ compares the left and middle expressions.
\end{proof}

\begin{lemma}\label{L:FullFcode}
Let $R$, $G$, and $u$ be as above. Let $\epsilon >0$ and $\delta>0$.
Then there are $c,K>0$ such that the following holds.
Let $M\in\Hom(W,V)$ be  $\epsilon$-balanced random matrix.
 Let $F\in \Hom(V,G)$ be a code of distance $\delta n$. Let $A\in\Hom(W,G)$.
   For all $n$ we have
\begin{align*}
 \left|\P(FM=A) - |G|^{-n-u}\right| &\leq
 \frac{K\exp(-cn)}{|G|^{n+u}}.
\end{align*}
\end{lemma}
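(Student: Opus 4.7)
The plan is to exploit the independence of the columns of $M$ and apply Lemma~\ref{L:Fcodecolumn} column by column. Since $M$ is $\epsilon$-balanced, its entries are independent, so the columns $M_1,\dots,M_{n+u}$ are independent $\epsilon$-balanced random vectors in $V=R^n$. By the independence of columns,
\[
\P(FM=A) = \prod_{j=1}^{n+u} \P(FM_j = A(w_j)),
\]
and Lemma~\ref{L:Fcodecolumn} gives $\P(FM_j = A(w_j)) = |G|^{-1} + y_j$ with $|y_j|\leq b$, where $b:=\exp(-\epsilon\delta n/a^2)$.

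For $n$ large enough that $b\leq |G|^{-1}$, each factor lies in $[|G|^{-1}-b,\,|G|^{-1}+b]$, so
\[
(|G|^{-1}-b)^{n+u} \;\leq\; \prod_j (|G|^{-1}+y_j) \;\leq\; (|G|^{-1}+b)^{n+u}.
\]
For the finitely many smaller $n$, the desired inequality is automatic by choosing $K$ large enough, so it suffices to control the two outer expressions. I would apply Lemma~\ref{L:1tom} with $x=|G|^{-1}$, $|y|\leq b$, and $m=n+u$. The hypothesis $|y|/x\leq 2^{1/(m-1)}-1$ holds for all sufficiently large $n$, since the right side decays only like $(\ln 2)/n$ while $|G|b$ decays exponentially in $n$. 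Lemma~\ref{L:1tom} then yields
\[
\bigl|(|G|^{-1}\pm b)^{n+u} - |G|^{-(n+u)}\bigr| \;\leq\; 2(n+u)|G|^{-(n+u-1)}b \;=\; 2|G|(n+u)\,b\cdot |G|^{-(n+u)},
\]
and hence
\[
\bigl|\P(FM=A) - |G|^{-(n+u)}\bigr| \;\leq\; \frac{2|G|(n+u)\exp(-\epsilon\delta n/a^2)}{|G|^{n+u}}.
\]

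Finally, the polynomial factor $(n+u)$ can be absorbed by choosing any $c$ strictly smaller than $\epsilon\delta/a^2$ and taking $K$ sufficiently large, producing the desired form $K\exp(-cn)/|G|^{n+u}$. The argument is essentially mechanical once Lemma~\ref{L:Fcodecolumn} is granted; the only subtlety is that the naive bound $\exp(-\epsilon\delta n/a^2)$ per column would, if multiplied $n+u$ times in the wrong way, swamp the main term $|G|^{-(n+u)}$, so the step that matters is using Lemma~\ref{L:1tom} (rather than, say, the AM--GM inequality) to convert per-column error into a clean relative error of the form $O((n+u)|G|b)$ on the full product.
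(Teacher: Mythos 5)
Your proof is correct and follows essentially the same route as the paper: decompose $\P(FM=A)$ column by column using independence, apply Lemma~\ref{L:Fcodecolumn} to each factor, and then use Lemma~\ref{L:1tom} to turn the per-column error into a relative error for the product. (The paper's proof text cites Lemma~\ref{L:singleest} at the combining step, which is evidently a typo for Lemma~\ref{L:1tom}, the lemma you correctly invoke.)
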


\begin{proof}
For $n$ large enough, we have
$$
\exp(-\epsilon\delta n/a^2)|G|\leq \log 2/(n+u-1) \leq 2^{1/(n+u-1)}-1,
$$
since
 $ 2^{1/(n+u-1)}-1= e^{\log 2/(n+u-1)} -1 \geq \log 2/(n+u-1)$.
So for $n$ sufficiently large, we can combine Lemma~\ref{L:Fcodecolumn} and Lemma~\ref{L:singleest} to obtain
$$
\left |  \P(FM=A) - |G|^{-n-u} \right| \leq 2(n+u)\exp(-\epsilon\delta n/a^2)|G|^{-n-u+1}.
$$
The lemma follows.
\end{proof}

So far, we have dealt with $F\in\Hom(V,G)$ that are codes.  Unfortunately, it is not sufficient to divide $F$ into codes and non-codes.  We need a more delicate division of $F$ based on the subgroups of $G$.  This division can be approximately understood as separating the $F$ based on what largest size subgroup  they are a code for.
For an integer $D$ with prime factorization $\prod_i p_i^{e_i}$, let $\ell(D)=\sum_i e_i$.
The following concept was introduced in \cite{Wood2014a}. Since $V_{\setminus\sigma}$ is a subgroup of $V$, for $F\in\Hom(V,G)$, the image $F(V_{\setminus\sigma})$ is a subgroup of $G$.

\begin{definition}
The \emph{depth} of an $F\in\Hom(V,G)$ is the maximal positive $D$ such that
there is a $\sigma\sub [n]$ with $|\sigma|< \ell(D)\delta n$ such that $D=[G:F(V_{\setminus\sigma})]$, or is $1$ if there is no such $D$. 
\end{definition}

\begin{remark}\label{R:depthcode}
In particular, if the depth of $F$ is $1$, then for every $\sigma\sub [n]$ with $|\sigma|< \delta n$,
we have that $F(V_{\setminus\sigma})=G$ (as otherwise $\ell([G:F(V_{\setminus\sigma})]])\geq 1$), and so we see that $F$ is a code of distance $\delta n$.
\end{remark}

We have a bound the number of $F$ that we have of depth $D$.

\begin{lemma}[Count $F$ of given depth, Lemma 5.2 of \cite{Wood2014a}]\label{L:countdepth}
There is a constant $K$ depending on $G$ such that if $D>1$, then number of $F\in \Hom(V,G)$ of depth $D$ is at most
$$
K\binom{n}{\lceil \ell(D)\delta n \rceil -1} |G|^n|D|^{-n+\ell(D)\delta n}.
$$
\end{lemma}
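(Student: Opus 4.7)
My plan is to bound the number of $F\in\Hom(V,G)$ of depth $D$ by an overcount of triples $(F,\sigma,H)$, where $\sigma\subset[n]$ is a candidate witness of size at most $m:=\lceil \ell(D)\delta n\rceil-1$ and $H\le G$ is an index-$D$ subgroup containing $F(V_{\setminus\sigma})$. By the definition of depth, every $F$ of depth $D$ produces at least one such triple (with $H=F(V_{\setminus\sigma})$ exactly), so the triple count is an upper bound for $\#\{F\text{ of depth }D\}$.

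For fixed $\sigma$ of size $k$ and fixed $H\le G$ of index $D$, the number of $F\in\Hom(V,G)$ with $F(v_i)\in H$ for all $i\notin\sigma$ equals $|H|^{n-k}|G|^{k}=|G|^n D^{-(n-k)}$, since $F$ is freely determined on the basis $\{v_i\}$ and the coordinates outside $\sigma$ are constrained to $H$. The number of index-$D$ subgroups of $G$ is bounded by some $s(G)$ depending only on $G$. Therefore
\[
\#\{F\text{ of depth }D\}\;\le\;s(G)\,|G|^n\sum_{k=0}^{m}\binom{n}{k}\,D^{-(n-k)}.
\]

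The final step is a geometric-series argument showing this sum is dominated (up to a constant) by its $k=m$ term. The ratio of consecutive summands $\binom{n}{k+1}D^{k+1}/(\binom{n}{k}D^{k})=D(n-k)/(k+1)$ is, for $k\le m<\ell(D)\delta n$, bounded below by roughly $D(1-\ell(D)\delta)/(\ell(D)\delta)$. Taking $\delta$ small enough relative to $G$ (using $\ell(D)\le \ell(|G|)$ for any divisor $D$ of $|G|$) forces this ratio to exceed some fixed $r>1$ uniformly in $n$ and $D$, so the entire sum is at most $(r/(r-1))$ times its final term. Combining with $D^{-(n-m)}\le D^{-n+\ell(D)\delta n}$ yields the stated bound, with $K$ absorbing $s(G)$ and $r/(r-1)$.

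The main obstacle is the uniformity of the geometric decay: the common ratio $r$ must stay bounded away from $1$ independently of both $n$ and $D$. This forces $K$ to depend implicitly on $\delta$ (which is fine as $\delta$ is fixed throughout), and it is essential that $D\ge2$, so that the exponential factor $D^{-(n-k)}$ can absorb the binomial growth $\binom{n}{k}$ for $k$ well below $n/2$. The overcount (allowing the strict inclusion $F(V_{\setminus\sigma})\subsetneq H$ and multiple witnesses $\sigma$ per $F$) only weakens the inequality, so it does not affect validity.
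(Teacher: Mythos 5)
Your proposal is a correct and self-contained proof. The paper itself does not prove this lemma; it simply cites Lemma~5.2 of \cite{Wood2014a}, so there is no in-text proof to compare against, but your argument is sound and is the natural route.

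The triple overcount is set up correctly: every $F$ of depth $D$ has a witness $\sigma$ with $|\sigma|\le m:=\lceil\ell(D)\delta n\rceil-1$ and the exact subgroup $H=F(V_{\setminus\sigma})$ of index $D$, so $\#\{F\text{ of depth }D\}$ is dominated by the triple count. The count of $F$ with $F(v_i)\in H$ for $i\notin\sigma$ is exactly $|H|^{n-|\sigma|}|G|^{|\sigma|}=|G|^nD^{-(n-|\sigma|)}$, and the number of index-$D$ subgroups is a constant $s(G)$. Your geometric-series step is the crux and it works: the consecutive ratio $D(n-k)/(k+1)$ is, for $k<m<\ell(D)\delta n$, at least $D(1-\ell(D)\delta)/(\ell(D)\delta)$; once $\delta$ is small enough relative to $\ell(|G|)$ (e.g.\ $\ell(|G|)\delta<1/4$), this is at least $3D\ge 6$ uniformly in $n$ and $D$, so the sum is within a fixed constant of its last term. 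This is genuinely needed, since otherwise a crude bound gives an extra factor of $m+1$ which grows with $n$. Two minor remarks: first, you say $K$ is ``forced to depend implicitly on $\delta$,'' but in fact once $\delta$ is below the threshold depending on $\ell(|G|)$, the ratio is bounded below by $6$ and $K$ can be taken to depend only on $G$, matching the lemma's statement exactly; second, the requirement that $\delta$ be small is not stated in the lemma but is always satisfied where the lemma is invoked (the proof of Theorem~\ref{T:MomMat} explicitly chooses $\delta$ small), so this is a harmless implicit hypothesis.
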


Now for each depth, we will get a bound on $\P(FM=0)$, with the smaller the depth, the better the bound.

\begin{lemma}[Bound probability for column given depth]\label{L:probdepthestimatecolumn}
Let $R$, $G$ be as above.   Let $\epsilon>0$ and $\delta>0$. If $F\in\Hom(V,G)$ has depth $D> 1$ and  $[G:F(V)]<D$, then for all $\epsilon$-balanced  random vectors $X$ in $V$
and all $n$,
$$\P(FX=0)\leq 
(1-\epsilon)\left( D|G|^{-1} +\exp(-\epsilon \delta n/a^2) \right)
.$$
\end{lemma}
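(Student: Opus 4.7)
The plan is to exploit the defining property of depth: there exists $\sigma\subset[n]$ with $|\sigma|<\ell(D)\delta n$ such that $F(V_{\setminus\sigma})=H$ for some subgroup $H\leq G$ of index $D$. The hypothesis $[G:F(V)]<D$ then guarantees $F(V)\not\subset H$, so $\bar F\colon V\to G/H$ is nonzero and, since $\bar F$ annihilates $V_{\setminus\sigma}$, there is some $i_0\in\sigma$ with $\bar F(v_{i_0})\neq 0$ in $G/H$. Splitting the random vector as $X=X_{\setminus\sigma}+X_\sigma$ and using independence of the entries, I would write
\[
\P(FX=0)=\sum_{g\in H}\P(F(X_\sigma)=g)\,\P(F(X_{\setminus\sigma})=-g)\leq \max_{g\in H}\P(F(X_{\setminus\sigma})=-g)\cdot \P(\bar F(X_\sigma)=0),
\]
since $F(X_{\setminus\sigma})\in H$ forces $FX=0$ to occur only on $\bar F(X_\sigma)=0$.

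Next I would check that the restriction $F|_{V_{\setminus\sigma}}\colon V_{\setminus\sigma}\to H$ is a code of distance $\delta n$: if some $\sigma'\subset[n]\setminus\sigma$ with $|\sigma'|<\delta n$ gave $F(V_{\setminus(\sigma\cup\sigma')})\subsetneq H$, then $D'=[G:F(V_{\setminus(\sigma\cup\sigma')})]$ would be a proper multiple of $D$, hence $\ell(D')\geq \ell(D)+1$, and $|\sigma\cup\sigma'|<(\ell(D)+1)\delta n\leq \ell(D')\delta n$, contradicting maximality of the depth. Applying Lemma~\ref{L:Fcodecolumn} to $F|_{V_{\setminus\sigma}}$ (with $n$ there being $n-|\sigma|$, noting that the resulting exponent $\exp(-\epsilon\delta n/a^2)$ is unchanged) yields $\P(F(X_{\setminus\sigma})=-g)\leq |H|^{-1}+\exp(-\epsilon\delta n/a^2)=D|G|^{-1}+\exp(-\epsilon\delta n/a^2)$ for each $g\in H$.

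It remains to bound $\P(\bar F(X_\sigma)=0)$ by $1-\epsilon$. Conditioning on the coordinates $X_i$ for $i\in\sigma\setminus\{i_0\}$, the event becomes $\bar F(v_{i_0})X_{i_0}=c$ for some fixed $c\in G/H$. Since $\bar F(v_{i_0})\neq 0$ in $G/H$, its order is some integer $d>1$ dividing $a$, the kernel of $r\mapsto r\bar F(v_{i_0})$ is $(d)\subset R$, and the set of admissible $X_{i_0}$ is either empty or a single coset of $(d)$ in $R$. Picking any prime $p\mid d$, such a coset lies entirely in one residue class mod $p$, and the $\epsilon$-balanced hypothesis gives $\P(X_{i_0}\in\text{coset})\leq 1-\epsilon$ regardless of the conditioning; averaging yields $\P(\bar F(X_\sigma)=0)\leq 1-\epsilon$.

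Multiplying the two estimates gives the claimed bound. The main obstacle is really the bookkeeping in the second step — one must show the maximality property in the definition of depth forces the code property on $F|_{V_{\setminus \sigma}}$, and this is exactly the reason the factor $\ell(D)$ (rather than a constant) appears in the definition. The final step, although short, is the only place the $\epsilon$-balanced hypothesis is invoked, and it is essential that the prime $p$ be chosen to divide the order of $\bar F(v_{i_0})$ in $G/H$ so that the $\epsilon$-balance on $R$ descends to a usable bound on a coset of a proper subgroup.
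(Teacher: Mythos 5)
Your argument follows the paper's proof essentially step for step: the same choice of $\sigma$ witnessing depth $D$, the same observation that $[G:F(V)]<D$ forces $\bar F(v_{i_0})\neq 0$ in $G/H$ for some $i_0\in\sigma$, the same factorization of $\P(FX=0)$ into a term bounded by $1-\epsilon$ (via $\epsilon$-balance at coordinate $i_0$) and a term bounded by $|H|^{-1}+\exp(-\epsilon\delta n/a^2)$ (via Lemma~\ref{L:Fcodecolumn} applied to $F|_{V_{\setminus\sigma}}$ after the same ``add $<\delta n$ more indices and strictly drop the image, contradicting maximality of depth'' argument). The only cosmetic difference is that you split via $\max_{g\in H}$ and independence rather than a conditional probability, and you spell out more carefully the coset-of-$(d)$-inside-a-residue-class-mod-$p$ reasoning that the paper leaves implicit in its appeal to $\epsilon$-balance.
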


\begin{proof}
Pick a $\sigma\sub [n]$ with $|\sigma|< \ell(D)\delta n$ such that $D=[G:F(V_{\setminus\sigma})].$
Let $F(V_{\setminus\sigma})=H.$  
However, since $[G:F(V)]<D$, we cannot have $\sigma$ empty. 
We have $FX=\sum_{i\not\in\sigma} F(v_i)X_i + \sum_{i\in \sigma} F(v_i)X_i.$ So
$$
\P(FX=0)=\P(\sum_{i\in\sigma} F(v_i)X_i \in H )\P(\sum_{i\not\in\sigma} F(v_i)X_i =- \sum_{i\in\sigma} F(v_i)X_i|\sum_{i\in\sigma} F(v_i)X_i \in H ).
$$
For the first factor, we note that since $[G:F(V)<D]$, there must be some $i\in\sigma$ with the reduction $F(v_i)\not= 0\in G/H$.  Thus conditioning on all other $X_k$ for $k\ne i$, by the $\epsilon$-balanced assumption on $X$, we have that $\P(\sum_{i\in \tau} F(v_i)X_i \in H )\leq 1-\epsilon$.

Then, we note that the restriction of $F$ to $V_{\setminus\sigma}$
 is a code of distance $\delta n$ in $\Hom(V_{\setminus\sigma}, H)$.
(If it were not, then by eliminating $\sigma$ and $<\delta n$ indices, we would eliminate $<(\ell(D)+1)\delta n$ indices
and have an image which was index that $D$ strictly divides, contradicting the depth of $F$.)  So conditioning on the $X_i$ with $i\in\sigma$, we can estimate the conditional probability above using Lemma~\ref{L:Fcodecolumn}:
$$
\P(\sum_{i\not\in\sigma} F(v_i)X_i =- \sum_{i\in\sigma} F(v_i)X_i|\sum_{i\in\sigma} F(v_i)X_i \in H )
\leq |H|^{-1} +\exp(-\epsilon \delta n/a^2).  
$$
The lemma follows.
\end{proof}

\begin{lemma}[Bound probability for matrix given depth]\label{L:probdepthestimate}
Let $R$, $G$, $u$ be as above.   Let $\epsilon >0$ and $\delta>0$. Then there is a real $K$ such that if $F\in\Hom(V,G)$ has depth $D> 1$ and  $[G:F(V)]<D$ (e.g. the latter is true if $F(V)=G$), then for all  $\epsilon$-balanced random matrices $M\in\Hom(W,V)$,
and all $n$,
$$\P(FM=0)\leq 
K \exp({-\epsilon n})  D^n|G|^{-n} 
.$$
\end{lemma}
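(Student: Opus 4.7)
The plan is to reduce the matrix bound to the per-column bound via independence, and then to convert the sum $D|G|^{-1}+\exp(-\epsilon\delta n/a^2)$ raised to the power $n+u$ into a clean power of $D|G|^{-1}$ using Lemma~\ref{L:1tom}. First I would use that $M$ is $\epsilon$-balanced, so its columns $M_1,\dots,M_{n+u}$ are independent $\epsilon$-balanced vectors in $V$, giving
\[
\P(FM=0)=\prod_{j=1}^{n+u}\P(FM_j=0).
\]
The hypotheses on $F$ (depth $D>1$ and $[G:F(V)]<D$) are exactly those of Lemma~\ref{L:probdepthestimatecolumn}, so each factor is at most $(1-\epsilon)\bigl(D|G|^{-1}+\exp(-\epsilon\delta n/a^2)\bigr)$. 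Multiplying and using $(1-\epsilon)^{n+u}\le\exp(-\epsilon n)$ yields
\[
\P(FM=0)\le \exp(-\epsilon n)\bigl(D|G|^{-1}+\exp(-\epsilon\delta n/a^2)\bigr)^{n+u}.
\]

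Next I would apply Lemma~\ref{L:1tom} with $x=D|G|^{-1}$, $y=\exp(-\epsilon\delta n/a^2)$, and $m=n+u$. Since $D\ge 2$ we have $x\ge 2/|G|$ while $y$ decays exponentially in $n$, whereas $2^{1/(n+u-1)}-1\sim\log 2/(n+u)$ decays only polynomially; hence for $n$ sufficiently large $|y|/x\le 2^{1/(m-1)}-1$, as required. Lemma~\ref{L:1tom} then gives
\[
(x+y)^{n+u}\le x^{n+u}+2(n+u)x^{n+u-1}y=x^{n+u}\bigl(1+2(n+u)y/x\bigr),
\]
and because $(n+u)y/x\to 0$ the parenthetical factor is at most $2$ for large $n$, so $(x+y)^{n+u}\le 2D^{n+u}|G|^{-n-u}$.

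Finally, since $D=[G:F(V_{\setminus\sigma})]$ divides $|G|$, we have $D\le|G|$ and hence $D^u|G|^{-u}\le 1$, giving
\[
\P(FM=0)\le 2\exp(-\epsilon n)\,D^{n+u}|G|^{-n-u}\le 2\exp(-\epsilon n)\,D^n|G|^{-n}
\]
for all $n$ above some threshold; the finitely many smaller $n$ are absorbed by enlarging $K$. I do not expect any serious obstacle: the only delicate point is checking the hypothesis of Lemma~\ref{L:1tom}, which is immediate from the exponential decay of $y$ against the polynomial decay of $2^{1/(n+u-1)}-1$ and the lower bound $x\ge 2/|G|$ coming from $D\ge 2$.
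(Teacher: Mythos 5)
Your proof is correct and follows essentially the same route as the paper: factor $\P(FM=0)$ over the independent columns, apply Lemma~\ref{L:probdepthestimatecolumn} to each column, bound $(1-\epsilon)^{n+u}$ by $\exp(-\epsilon n)$, and then use Lemma~\ref{L:1tom} with $x=D|G|^{-1}$ and $y=\exp(-\epsilon\delta n/a^2)$ to turn the binomial into a clean power plus an error term absorbed into $K$. Your added care in verifying the hypothesis of Lemma~\ref{L:1tom} (using $D\ge 2$ and the exponential decay of $y$) and in noting $D^u|G|^{-u}\le 1$ to pass from exponent $n+u$ to $n$ are exactly the implicit steps the paper elides with the phrase ``for $n$ large enough'' and ``the lemma follows.''
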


\begin{proof}
By the independence of the columns of $M$, we can  take the $n+u$th power of the bound in Lemma~\ref{L:probdepthestimatecolumn}, and apply Lemma~\ref{L:1tom}.
We have, for $n$ large enough
\begin{align*}
&(1-\epsilon)^{n+u}\left( D|G|^{-1} +\exp(-\epsilon \delta n/a^2) \right)^{n+u} \\
\leq  &\exp({-\epsilon (n+u)}) 
\left( D^{n+u}|G|^{-n-u} +2(n+u)\exp(-\epsilon \delta n/a^2) D^{n+u-1}|G|^{-n-u+1} \right).
\end{align*}
The lemma follows.
\end{proof}

Now we can combine the estimates we have for $\P(FM=0)$ for $F$ of various depth with the bounds we have on the number of $F$ of each depth to obtain our main result on the moments of cokernels of random matrices.  
\begin{theorem}\label{T:MomMat}
Let $a$ be a  positive integer, and $u$ be a non-negative integer.
Let $\epsilon>0$ be a real number and $G$ a finite abelian group with exponent dividing $a$.
Then there are $c,K>0$ 
 such that the following holds.
Let $M$ be an $\epsilon$-balanced $n\times(n+u)$ random matrix with entries in $\Z/a\Z$.
\begin{align*}
\left|\E(\#\Sur(\cok(M),G))  -|G|^{-u} \right| \leq Ke^{-cn}.
\end{align*}
\end{theorem}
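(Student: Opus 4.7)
My plan is to expand
\[
\E(\#\Sur(\cok(M),G))=\sum_{F\in\Sur(V,G)} \P(FM=0)
\]
as in \eqref{E:expandF} and split the sum according to the depth of $F$, choosing $\delta>0$ small enough (depending on $\epsilon$ and $G$) to absorb every error. A preliminary observation is that any $F$ of depth $1$ is automatically surjective: taking $\sigma=\emptyset$ in the definition forces $[G:F(V)]$ to lie outside the set of allowable depths $D>1$, so $F(V)=G$. Combined with Remark~\ref{R:depthcode}, depth-$1$ homomorphisms coincide with surjective codes of distance $\delta n$, and in particular they all lie in $\Sur(V,G)$.

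For the depth-$1$ portion of the sum I would apply Lemma~\ref{L:FullFcode} with $A=0$ to get $|\P(FM=0)-|G|^{-n-u}|\leq K e^{-cn}|G|^{-n-u}$ for each such $F$. Writing $N_1$ for the number of depth-$1$ homomorphisms, this contributes a main term $N_1\cdot|G|^{-n-u}$ together with a cumulative error of size $O(e^{-cn})$, since $N_1\leq|\Hom(V,G)|=|G|^n$. For $F\in\Sur(V,G)$ of depth $D>1$, surjectivity gives $[G:F(V)]=1<D$, so Lemma~\ref{L:probdepthestimate} applies and yields $\P(FM=0)\leq K e^{-\epsilon n}D^n|G|^{-n}$. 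Multiplying by the count from Lemma~\ref{L:countdepth} bounds the depth-$D$ contribution by
\[
K'\binom{n}{\lceil\ell(D)\delta n\rceil-1}\,e^{-\epsilon n}\,D^{\ell(D)\delta n}.
\]
Estimating the binomial by $e^{H(\ell(D)\delta)n}$ (with $H$ the binary entropy) and summing over the finitely many divisors $D>1$ of $|G|$, this is $O(e^{-c'n})$ provided $\delta$ is chosen small enough that $\ell(D)\delta\log D+H(\ell(D)\delta)<\epsilon$ for every such $D$.

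The very same count also bounds the number of $F\in\Hom(V,G)$ of depth $>1$, so $N_1=|G|^n-O(|G|^n e^{-c'n})$, and therefore $N_1\cdot|G|^{-n-u}=|G|^{-u}+O(e^{-c'n})$. Combining this with the error from depth $1$ and the error from depths $D>1$ gives $|\E(\#\Sur(\cok(M),G))-|G|^{-u}|\leq K e^{-cn}$, as required.

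The step I expect to be delicate is the calibration of $\delta$: it must be small enough to dominate simultaneously every factor $D^{\ell(D)\delta n}$ and the entropy blow-up from the binomial coefficient in Lemma~\ref{L:countdepth}, yet cannot depend on $n$. Because $G$ is fixed there are only finitely many divisors $D$ of $|G|$ to consider, so this reduces to a finite list of inequalities of the form $\ell(D)\delta\log D+H(\ell(D)\delta)<\epsilon$, and any sufficiently small $\delta$ will do. Everything else is a straightforward application of the column- and matrix-level Littlewood--Offord estimates already established.
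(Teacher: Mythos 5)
Your proof is correct and follows essentially the paper's strategy: expand the moment via \eqref{E:expandF}, handle the depth-$1$ (code) terms with Lemma~\ref{L:FullFcode}, kill the higher-depth terms with Lemmas~\ref{L:countdepth} and \ref{L:probdepthestimate}, and calibrate $\delta$ at the end, with your direct count $N_1\cdot|G|^{-n-u}$ matching the paper's comparison against $\sum_{F\in\Hom(V,G)}|G|^{-n-u}=|G|^{-u}$ up to the same exponentially small error. One minor imprecision: Remark~\ref{R:depthcode} gives depth $1\Rightarrow$ code of distance $\delta n$, but the converse you assert need not hold---a surjective code of distance $\delta n$ can still have depth $D>1$ when $\ell(D)\geq 2$, since the depth condition allows $|\sigma|<\ell(D)\delta n>\delta n$---though this is harmless here because you only invoke the forward implication, and any such higher-depth codes are correctly absorbed into your depth-$D>1$ estimate.
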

\begin{proof}
By Equation~\eqref{E:expandF}, we need to estimate
$
\sum_{F\in \Sur(V,G)} \P(FM=0).
$
We let $K$ change in each line, as long as it is a constant depending only on $a,u,\epsilon,G$.
Take $d<\min(\epsilon,\log(2)).$
Using Lemmas~\ref{L:countdepth} and \ref{L:probdepthestimate} we have
\begin{align*}
\sum_{\substack{F\in \Sur(V,G)\\ F\textrm{ not  code of distance $\delta n$}
} }
\P(FX=0)&\leq 
\sum_{\substack{D>1\\ D\mid\#G}} \sum_{\substack{F\in \Sur(V,G)\\ F\textrm{  depth $D$}
} }
\P(FX=0)\\
&\leq  \sum_{\substack{D>1\\ D\mid\#G}} K\binom{n}{\lceil \ell(D)\delta n \rceil -1} 
|G|^{n}D^{-n+\ell(D)\delta n} \exp(-\epsilon n) D^n|G|^{-n}\\
%
&\leq  \sum_{\substack{D>1\\ D\mid\#G}} K\binom{n}{\lceil \ell(D)\delta n \rceil -1} 
D^{\ell(D)\delta n} \exp(-\epsilon n) \\
&\leq K\binom{n}{\lceil \ell(|G|)\delta n \rceil -1} 
|G|^{\ell(|G|)\delta n} \exp(-\epsilon n) \\
 &\leq  K 
 e^{-dn},
\end{align*}
as long as we choose $\delta$ small enough.

Also, from Lemma~\ref{L:countdepth}, we can choose $\delta$ small enough so that we have
\begin{align*}
\sum_{\substack{F\in \Sur(V,G)\\ F\textrm{ not  code of distance $\delta n$}
} }
|G|^{-n-u}&\leq 
\sum_{\substack{D>1\\ D\mid\#G}} \sum_{\substack{F\in \Sur(V,G)\\ F\textrm{  depth $D$}
} }
|G|^{-n-u}\\
&\leq \sum_{\substack{D>1\\ D\mid\#G}}  K\binom{n}{\lceil \ell(D)\delta n \rceil -1} |G|^n|D|^{-n+\ell(D)\delta n}|G|^{-n}
\\
&\leq K \binom{n}{\lceil \ell(|G|)\delta n \rceil -1}2^{-n+\ell(|G|)\delta n}  \\
 &\leq  K 
 e^{-dn}.
\end{align*}

We also have 
 \begin{align*}
\sum_{\substack{F\in \Hom(V,G)\setminus \Sur(V,G)
} }
|G|^{-n-u}&\leq 
\sum_{H \textrm{ proper s.g of } G} \sum_{\substack{F\in \Hom(V,H)\
} }
|G|^{-n-u}\\
&\leq 
\sum_{H \textrm{ proper s.g of } G} |H|^{n+u}
|G|^{-n-u}\\
&\leq K 
 e^{-dn}.
\end{align*}

Then given a choice of $\delta$ that satisfies the two requirements above, using Lemma~\ref{L:FullFcode} we have a $c$ such that
\begin{align*}
\sum_{\substack{F\in \Sur(V,G)\\ F\textrm{  code of distance $\delta n$}
} }
\left| \P(FX=0) -|G|^{-m}\right|
&\leq 
 Ke^{-cn} . 
\end{align*}
If necessary, we take $c$ smaller so $c \leq d$.
In conclusion, 
\begin{align*}
&\left| \left(\sum_{F\in \Sur(V,G)} \P(FX=0) \right) -|G|^{-u} \right| =\left| \left(\sum_{F\in \Sur(V,G)} \P(FX=0) \right) -\left(\sum_{F\in \Hom(V,G)} |G|^{-n-u} \right)  \right|  \\
&\leq
\sum_{\substack{F\in \Sur(V,G)\\ F\textrm{  code of distance $\delta n$}
} }
\left| \P(FX=0) -|G|^{-n-u}\right|+
 \sum_{\substack{F\in \Sur(V,G)\\ F\textrm{ not  code of distance $\delta n$}
} }
 \P(FX=0)  + \sum_{\substack{F\in \Hom(V,G)\\ F\textrm{ not  code of dist. $\delta n$}
} } |G|^{-n-u} \\
\\
&\leq Ke^{-cn}.
\end{align*}
\end{proof}


\section{Moments determine the distribution}

We use the following theorem to determine the asymptotic 
distribution of $\cok(M)$ as $n\ra\infty$ from the moments in Theorem~\ref{T:MomMat}.

\begin{theorem}[c.f. Theorem 8.3 in \cite{Wood2014a}]\label{T:MomDetDetail}
Let $X_n$ and $Y_n$ be sequences of random finitely generated abelian groups.
Let $a$ be a positive integer and $A$ be the set of (isomorphism classes of) abelian groups with exponent dividing $a$.
Suppose that for every $G\in A$, we have a number $M_G\leq |\wedge^2 G|$ such that
$$
\lim_{n\ra \infty} \E(\# \Sur(X_n, G)) = M_G. 
$$
Then for every $H\in A$,
 the limit
$
\lim_{n\ra\infty} \P(X_n\tensor \Z/a\Z \isom H)
$
exists, and for all $G\in A$ we have
$$
\sum_{H\in A} \lim_{n\ra\infty} \P(X_n\tensor \Z/a\Z \isom H) \#\Sur(H,G)=M_G.
$$
If for every $G\in A$, we also have
$
\lim_{n\ra \infty} \E(\# \Sur(Y_n, G)) = M_G,
$
then, we have that for every every $H\in A$
$$
\lim_{n\ra\infty} \P(X_n\tensor \Z/a\Z \isom H) =\lim_{n\ra\infty} \P(Y_n\tensor \Z/a\Z \isom H).
$$
\end{theorem}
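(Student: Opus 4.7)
The plan is to perform Möbius-type inversion on the poset of finite abelian groups with exponent dividing $a$, expressing each probability $\P(X_n \otimes \Z/a\Z \cong H)$ as an absolutely convergent series in the surjection moments $\E(\#\Sur(X_n, G))$ indexed by $G \in A$. Once such an inversion is in place and its coefficients are controlled against the growth rate $|\wedge^2 G|$, both conclusions of the theorem follow by passing to the limit term by term.

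First I would set up the inversion. For each $n$, the tautological identity
$$\E(\#\Sur(X_n, G)) = \sum_{H \in A} \P(X_n \otimes \Z/a\Z \cong H)\, \#\Sur(H, G)$$
defines a linear operator on functions on $A$. Using orthogonality relations among characters of finite abelian groups (in the spirit of Burnside's lemma, summing over kernels of surjections), one constructs coefficients $\alpha_{H, G} \in \Q$ with denominators essentially $|\Aut(G)|$ satisfying
$$\sum_{G \in A} \alpha_{H, G}\, \#\Sur(H', G) = \mathbf{1}_{H \cong H'}\qquad\text{for all } H, H' \in A.$$
Applied formally to the above identity, this inversion yields
$$\P(X_n \otimes \Z/a\Z \cong H) = \sum_{G \in A} \alpha_{H, G}\, \E(\#\Sur(X_n, G)).$$

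The main obstacle is justifying the absolute convergence of the series $\sum_{G \in A} |\alpha_{H, G}|\cdot |\wedge^2 G|$, which is exactly what the hypothesis $M_G \le |\wedge^2 G|$ is calibrated for. The coefficients $\alpha_{H, G}$ are bounded in terms of $|\Aut(G)|^{-1}$ times a polynomial factor in $|G|$, and for groups of bounded exponent $|\Aut(G)|$ grows roughly like $|G|^{r-1}$ in the $p$-rank $r$ of $G$, while $|\wedge^2 G|$ grows like $|G|^{(r-1)/2}$. Thus $\sum_G |\alpha_{H, G}|\cdot|\wedge^2 G|$ is dominated by a Cohen--Lenstra-type partition sum over $A$, which converges because $a$ (and hence the exponent) is fixed. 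The bound $|\wedge^2 G|$ sits precisely at the threshold where this argument still works; any faster allowed growth would break summability.

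Once absolute convergence is established, for each fixed $H \in A$ the hypothesis $\E(\#\Sur(X_n, G)) \to M_G$ combined with a dominated-convergence argument across the index $G$ lets us exchange $\lim_n$ with the sum:
$$\lim_{n\to\infty} \P(X_n \otimes \Z/a\Z \cong H) = \sum_{G \in A} \alpha_{H, G}\, M_G.$$
This proves the existence of the limit, and reversing the inversion recovers the stated relation $\sum_{H} \lim_n \P(X_n \otimes \Z/a\Z \cong H)\, \#\Sur(H, G) = M_G$. Since the right-hand side of the displayed formula depends only on the sequence $(M_G)_{G \in A}$, the second claim is immediate: whenever the moments of $X_n$ and $Y_n$ agree in the limit, their limiting distributions must coincide on every $H \in A$.
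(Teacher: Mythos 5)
The paper itself does not prove Theorem~\ref{T:MomDetDetail}; it is cited from \cite{Wood2014a} (Theorem 8.3 there). Your overall strategy --- inverting the linear system relating moments to probabilities over the poset $A$, with the bound $M_G\leq|\wedge^2 G|$ calibrated to make the inversion absolutely convergent --- is the same idea used in that reference. But there is a real gap in your passage to the limit. You write the inversion as
$\P(X_n\tensor\Z/a\Z\isom H)=\sum_{G\in A}\alpha_{H,G}\,\E(\#\Sur(X_n,G))$
and then invoke dominated convergence in $G$. Dominated convergence needs a summable dominating function that is uniform in $n$; the hypothesis only gives pointwise (in $G$) convergence $\E(\#\Sur(X_n,G))\to M_G$ and a bound on the \emph{limit}, not on the finite-$n$ moments. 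Indeed, for finite $n$ the quantity $\E(\#\Sur(X_n,G))$ can grow far faster than $|\wedge^2 G|$ (take $X_n=\Z^n$: then $\#\Sur(X_n,G)\sim|G|^n$), so the inversion series need not even converge at finite $n$, let alone admit a uniform dominant. The argument in \cite{Wood2014a} instead extracts subsequential limits of the distributions $\nu_n(H)=\P(X_n\tensor\Z/a\Z\isom H)$ by compactness, uses Fatou to get $\sum_H\nu(H)\#\Sur(H,G)\leq M_G$ for any limit point $\nu$, and then carries out a separate tightness/no-escape-of-mass argument (this is where $M_G\leq|\wedge^2 G|$ is really used) to upgrade the inequality to equality, before applying uniqueness of the solution to the moment equations. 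Your sketch omits that tightness step entirely, and it is the heart of the matter.

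A secondary inaccuracy: the claimed bound $|\alpha_{H,G}|\lesssim|\Aut(G)|^{-1}\cdot\mathrm{poly}(|G|)$ is too strong. Already for $a=p$ and $H$ trivial one finds $\alpha_{0,(\Z/p)^s}=(-1)^s p^{\binom{s}{2}}/|\Aut((\Z/p)^s)|$, so the correct size is roughly $|\wedge^2 G|/|\Aut(G)|$, which is superpolynomial in $|G|$ as the rank grows. Fortunately this still yields $\sum_G|\alpha_{H,G}|\,|\wedge^2 G|<\infty$ (the terms decay like $p^{-s}$), so the threshold claim survives, but the intermediate estimate as you state it is wrong and worth correcting.
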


For the rest of this section, we fix a non-negative integer $u$.  We construct a random abelian group according to Cohen and Lenstra's distribution for each $u$ as follows.  Let $P$ be the set of primes dividing $a$.  Independently for each $p$, we have a random finite abelian $p$-group $Y_p$ given by taking each group $B$ with probability
$$
\frac{\prod_{k=1}^\infty (1-p^{-k-u})}{|B|^u|\Aut(B)|}.
$$
We then form a random group $Y$ by taking $Y=\prod_{p\in P} Y_p$. 

\begin{lemma}\label{L:CLmom} 
For every finite abelian group $G$ with exponent dividing $a$, we have
\begin{equation*}
\E(\#\Sur(Y,G))=|G|^{-u}.
\end{equation*}
\end{lemma}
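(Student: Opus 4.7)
The plan is to separate $Y$ into its Sylow components, reduce the statement to an independent single-prime identity, and then invoke a classical Cohen--Lenstra moment computation.

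First I would use the Sylow decomposition $G = \prod_{p \in P} G_p$. Since $\Hom(A, B) = 0$ whenever $A$ is a $q$-group and $B$ is a $p$-group with $p \neq q$, every homomorphism $Y \to G$ decomposes canonically as a tuple $(\phi_p)_{p \in P}$ with $\phi_p \in \Hom(Y_p, G_p)$, and such a tuple is a surjection iff each $\phi_p$ is. Hence, as integer-valued random variables,
$$\#\Sur(Y, G) \;=\; \prod_{p \in P} \#\Sur(Y_p, G_p),$$
and by independence of the $Y_p$ this gives $\E(\#\Sur(Y, G)) = \prod_{p \in P} \E(\#\Sur(Y_p, G_p))$. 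Since $|G|^{-u} = \prod_{p \in P} |G_p|^{-u}$, the lemma reduces to showing $\E(\#\Sur(Y_p, G_p)) = |G_p|^{-u}$ for each prime $p \in P$.

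Next I would fix $p$ and expand using the density defining $Y_p$:
$$\E(\#\Sur(Y_p, G_p)) \;=\; \prod_{k=1}^\infty (1 - p^{-k-u}) \cdot \sum_{B} \frac{\#\Sur(B, G_p)}{|B|^u \, |\Aut(B)|},$$
where $B$ ranges over isomorphism classes of finite abelian $p$-groups. The single-prime claim is then equivalent to the identity
$$\sum_{B} \frac{\#\Sur(B, G_p)}{|B|^u \, |\Aut(B)|} \;=\; \frac{|G_p|^{-u}}{\prod_{k=1}^\infty (1 - p^{-k-u})}.$$

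This is a classical Cohen--Lenstra moment identity; it appears in \cite{Cohen1984} and in the exact form needed here also in \cite{Wood2014a}, and in the paper it would simply be cited. If one wanted to derive it in place, the cleanest route is to rewrite $\#\Sur(B, G_p) = |\Aut(G_p)| \cdot \#\{K \le B : B/K \cong G_p\}$ and re-index the double sum over pairs $(B, K)$ by pairs consisting of an isomorphism class of kernel $K$ and an extension class in $\mathrm{Ext}^1(G_p, K)$, then compare with the base Hall identity $\sum_K |\Aut(K)|^{-1} |K|^{-u} = \prod_{k \geq 1}(1 - p^{-k-u})^{-1}$. The main obstacle in a direct derivation is the careful bookkeeping of the $\Aut(B)$-stabilizer of a surjection $\phi : B \twoheadrightarrow G_p$ and of $\Aut(K)$-orbits on $\mathrm{Ext}^1(G_p, K)$; it is for this reason that the shortest exposition is to quote the identity from the prior work. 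Multiplying the single-prime equalities over $p \in P$ then produces the stated moment.
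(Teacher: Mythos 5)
Your proposal is correct and follows the same route as the paper: factor over primes to reduce to a single Sylow factor, then invoke the Cohen--Lenstra moment identity $\sum_B \#\Sur(B,G_p)/(|B|^u|\Aut(B)|) = |G_p|^{-u}\prod_{k\ge 1}(1-p^{-k-u})^{-1}$. The one difference is in specificity: where you cite this identity broadly, the paper derives it in two short steps by multiplying Cohen--Lenstra's Proposition~4.1(ii) by $|\Aut(G)|$, dividing by $p^{iu}$ and summing over $i$, and then closing with Corollary~3.7(i) for $\sum_B |B|^{-u}|\Aut(B)|^{-1} = \prod_{j\ge 1}(1-p^{-j-u})^{-1}$. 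Your alternative direct sketch (via kernels and $\mathrm{Ext}^1$) is a reasonable outline but, as you acknowledge, not carried out; for a self-contained writeup you would want to replace that gesture with the specific Cohen--Lenstra citations the paper uses.
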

In particular, taking $G$ the trivial group says that the above distribution on $B$ is a probability distribution.   

\begin{proof}
By factoring over primes $p\in P$, we can reduce to the case when $P=\{p\}$.
Let $\mathcal{A}$ be the set of finite abelian $p$-groups.
 Multiplying \cite[Proposition 4.1 (ii)]{Cohen1984} (for $k=\infty$ and $K=A$) by $|\Aut(K)|$, we obtain, for every $i$,
$$
\sum_{B\in \mathcal{A}, |B|=p^i}  \frac{|\Sur(B,G)|}{|\Aut(B)|}= \sum_{B\in \mathcal{A}, |B|=p^i/|G|} \frac{1}{|\Aut(B)|}.
$$ 
Dividing by $p^{iu}$ and summing over all $i$, we obtain
$$
\sum _{B\in \mathcal{A}} \frac{|\Sur(B,G)|}{|B|^{u}|\Aut(B)|}=|G|^{-u} \sum _{B\in \mathcal{A}} \frac{1}{|B|^u|\Aut(B)|}.
$$
By \cite[Corollary 3.7 (i)]{Cohen1984} (with $s=u$ and $k=\infty$), we have
$
\sum _{B\in \mathcal{A}} |B|^{-u}|\Aut(B)|^{-1} =\prod_{j\geq 1} (1-p^{-j-u})^{-1}, 
$
and the lemma follows.
\end{proof} 

We can now determine the distribution of our cokernels by comparing their moments to those of $Y$.


\begin{corollary}[of Theorem~\ref{T:MomMat} and Theorem \ref{T:MomDetDetail}] \label{C:first}
Let $\epsilon>0$ and let $M\in M_{n\times (n+u)}(\Z)$ (resp, $M\in M_{n\times (n+u)}(\Z_p)$) be an $\epsilon$-balanced random matrix. Let $G$ be a finite abelian group with exponent dividing $a$ (resp., $p^k$). For $Y$ defined  above,
$$
\lim_{n\ra\infty} \P(\cok(M)\tensor \Z/a\Z \isom G) = \P(\cok(Y)\tensor \Z/a\Z \isom G).
$$
\end{corollary}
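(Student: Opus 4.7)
The plan is to deduce Corollary~\ref{C:first} as a direct consequence of the moment computation in Theorem~\ref{T:MomMat}, the moment identification in Lemma~\ref{L:CLmom}, and the moment-determines-distribution result of Theorem~\ref{T:MomDetDetail}. First I would set $X_n := \cok(M)$ where $M$ is the $\epsilon$-balanced $n\times(n+u)$ random matrix, and $Y_n := Y$, the (fixed) Cohen-Lenstra random group defined above.

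Next I would verify the hypotheses of Theorem~\ref{T:MomDetDetail} for the candidate moments $M_G = |G|^{-u}$. Theorem~\ref{T:MomMat} gives $\lim_{n\to\infty} \E(\#\Sur(X_n,G)) = |G|^{-u}$ for every finite abelian $G$ with exponent dividing $a$, with exponentially small error. Lemma~\ref{L:CLmom} identifies the same value $|G|^{-u}$ as $\E(\#\Sur(Y,G))$, so both sequences of moments converge to the same $M_G$. The bound $M_G \leq |\wedge^2 G|$ required by Theorem~\ref{T:MomDetDetail} is immediate: $|G|^{-u} \leq 1 \leq |\wedge^2 G|$ since $u \geq 0$ and $\wedge^2 G$ is a finite abelian group (hence of order at least $1$). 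With these inputs the theorem directly yields
\begin{equation*}
\lim_{n\to\infty} \P(\cok(M)\tensor \Z/a\Z \isom G) = \P(Y \tensor \Z/a\Z \isom G),
\end{equation*}
which is the claim in the integer case. Since $Y$ is defined to have support on groups of exponent dividing $a$, $\P(Y \tensor \Z/a\Z \isom G) = \P(\cok(Y) \tensor \Z/a\Z \isom G)$ in the notation of the corollary.

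For the $\Z_p$ case, I would observe that an $\epsilon$-balanced matrix $M \in M_{n\times(n+u)}(\Z_p)$ reduces modulo $p^k$ to an $\epsilon$-balanced matrix over $R = \Z/p^k\Z$, and that $\cok(M) \tensor \Z/p^k\Z$ is determined by this reduction. Applying the integer case with $a = p^k$ and with $G$ ranging over groups of exponent dividing $p^k$ then gives the desired statement for all such $G$, which exhausts the finite abelian groups of exponent dividing $p^k$ as $k$ varies.

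The only real content here is the verification of the moment bound hypothesis $M_G \leq |\wedge^2 G|$ of Theorem~\ref{T:MomDetDetail}, and as noted this is trivial for $M_G = |G|^{-u}$; all of the difficulty has already been absorbed into Theorem~\ref{T:MomMat} (the analytic work with codes and depth) and into Theorem~\ref{T:MomDetDetail} (the combinatorial inversion of surjection moments to isomorphism probabilities). So I do not anticipate any genuine obstacle in this corollary beyond carefully quoting the three ingredients.
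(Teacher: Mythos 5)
Your proposal is correct and matches the paper's (implicit) proof: the corollary is deduced exactly by combining Theorem~\ref{T:MomMat} (giving $\lim_n \E(\#\Sur(\cok(M)\otimes\Z/a\Z, G)) = |G|^{-u}$), Lemma~\ref{L:CLmom} (giving $\E(\#\Sur(Y,G)) = |G|^{-u}$), and Theorem~\ref{T:MomDetDetail}, with the trivial check $|G|^{-u}\leq 1\leq |\wedge^2 G|$. Your reduction mod $p^k$ to handle the $\Z_p$ case (and, implicitly, mod $a$ for the $\Z$ case) is also the intended argument.
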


In particular, we can conclude the following, which proves Theorems~\ref{T:MainIntro} and \ref{T:u}.
\begin{corollary}\label{C:Main}
Let $\epsilon>0$ and let $M\in M_{n\times (n+u)}(\Z)$ (resp, $M\in M_{n\times (n+u)}(\Z_p)$) be an $\epsilon$-balanced random matrix.
Let $B$ be a finite abelian group (resp., finite abelian $p$-group).  Let $P$ be a finite set of primes including all those dividing $|B|$ (resp., $P=\{p\})$.
Let $H_P:=\prod_{p\in P} H_p$.
  Then
\begin{align*}
\lim_{n\ra\infty} \P(\cok(M)_P\isom B) 
&= \frac{1}{|B|^u|\Aut(B)|}
\prod_{p\in P}
\prod_{k=1}^\infty (1-p^{-k-u}).
\end{align*}
\end{corollary}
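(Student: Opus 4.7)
The plan is to deduce the corollary from Corollary~\ref{C:first} by choosing a modulus $a$ whose prime support is exactly $P$ and whose $p$-adic valuations are large enough to capture all of $B$. Specifically, fix $a=\prod_{p\in P}p^N$ with $N$ larger than the exponent of $B_p$ for every $p\in P$. For such $a$, I claim the event $\{\cok(M)\otimes\Z/a\Z\isom B\}$ coincides with the event $\{\cok(M)_P\isom B\text{ and }\cok(M)\text{ has free rank }0\}$. In the forward direction, if $\cok(M)$ is torsion and $\cok(M)_P\isom B$, then $\cok(M)\otimes\Z/a\Z=\cok(M)_P/a\cok(M)_P=\cok(M)_P\isom B$, since $a$ annihilates $\cok(M)_P$ and renders the $P'$-part zero (the primes outside $P$ are invertible modulo $a$). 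Conversely, any free $\Z$-summand of $\cok(M)$, or any cyclic factor of $\cok(M)_p$ of order $\geq p^N$, contributes a cyclic summand of order $p^N$ to $\cok(M)\otimes\Z/a\Z$ that is absent from $B$ (whose $p$-exponent is less than $p^N$), preventing isomorphism.

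Next, to dispense with the free-rank obstruction, I would show $\P(\cok(M)\text{ has free rank}\geq 1)\to 0$ as $n\to\infty$ by a Markov-type argument. If $\cok(M)$ has a free summand, then sending its generator to any unit of $\Z/a\Z$ yields a surjection $\cok(M)\to\Z/a\Z$, so $\#\Sur(\cok(M),\Z/a\Z)\geq\phi(a)$ on this event. Combining with Theorem~\ref{T:MomMat} applied to $M\bmod a$, which gives $\E(\#\Sur(\cok(M),\Z/a\Z))\to a^{-u}$, we obtain
\begin{equation*}
\limsup_{n\to\infty}\P(\cok(M)\text{ has free rank}\geq 1)\leq\frac{a^{-u}}{\phi(a)},
\end{equation*}
and letting $N\to\infty$ drives the right-hand side to zero.

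Combining the two preceding steps yields $\P(\cok(M)_P\isom B)=\P(\cok(M)\otimes\Z/a\Z\isom B)+o(1)$ as $n\to\infty$. Corollary~\ref{C:first} then supplies the limit $\P(Y\otimes\Z/a\Z\isom B)$, which equals $\P(Y_P\isom B)$ by the same bookkeeping (tautologically here, since $Y$ is already pure torsion supported on $P$). By independence of the $Y_p$ and the $u$-shifted Cohen-Lenstra formula of Lemma~\ref{L:CLmom} and its derivation,
\begin{equation*}
\P(Y_P\isom B)=\prod_{p\in P}\P(Y_p\isom B_p)=\prod_{p\in P}\frac{\prod_{k=1}^\infty(1-p^{-k-u})}{|B_p|^u|\Aut(B_p)|}=\frac{1}{|B|^u|\Aut(B)|}\prod_{p\in P}\prod_{k=1}^\infty(1-p^{-k-u}),
\end{equation*}
using $|B|=\prod_p|B_p|$ and $|\Aut(B)|=\prod_p|\Aut(B_p)|$. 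The $\Z_p$ case is the specialization $P=\{p\}$ of the same argument, with $a=p^N$. The main obstacle is the Markov-type control of the free rank; once that is in hand, everything else is unpacking definitions and invoking Corollary~\ref{C:first}.
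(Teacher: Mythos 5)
Your proof is correct, and in fact it is more careful than the paper's. You take the same basic route (choose a modulus $a$ supported on $P$ with valuations exceeding those of $B$, then invoke Corollary~\ref{C:first}), but you correctly identify a gap that the paper's proof glosses over: the paper asserts that for \emph{any finitely generated} abelian group $H$ one has $H\otimes \Z/a\Z\isom B$ if and only if $H_P\isom B$, but the ``if'' direction fails when $H$ has a free summand, since then $H\otimes\Z/a\Z$ acquires a $\Z/a\Z$ summand of order strictly larger than the exponent of $B$. The events genuinely differ, and one must separately show that $\cok(M)$ has free rank $\geq 1$ with vanishing probability. Your Markov-type argument fills exactly this hole: a free summand forces $\#\Sur(\cok(M),\Z/a\Z)\geq\phi(a)$, and comparing with the moment bound $\E\#\Sur(\cok(M),\Z/a\Z)\to a^{-u}$ from Theorem~\ref{T:MomMat} (applied to $M\bmod a$, which is legitimate since surjections to $\Z/a\Z$ factor through $\cok(M)\otimes\Z/a\Z$) gives $\limsup_n\P(\text{free rank}\geq 1)\leq a^{-u}/\phi(a)$, which can be made arbitrarily small by increasing $N$. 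This is a clean and self-contained fix. The rest of your argument — the identification $\P(Y\otimes\Z/a\Z\isom B)=\P(Y_P\isom B)$ (here tautological since $Y$ is already finite and $P$-supported) and the factorization of $|B|$ and $|\Aut(B)|$ over $p\in P$ — matches the paper's intent. One could also just cite the well-known fact that discrete random matrices are nonsingular over $\Q$ with probability tending to $1$, but your argument has the virtue of deriving it from the machinery already in the paper.
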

\begin{proof}
Note that if $B$ is a finite abelian group with exponent that has prime factorization $\prod_{p\in P} p^{e_p}$, then if we take $a=\prod_{p\in P} p^{e_p+1}$, for any finitely generated abelian group $H$, we have
$
H\tensor \Z/a\Z \isom G$  if and only if $ H_P \isom G$.

So the corollary follows from Corollary~\ref{C:first} and the construction of $Y$
\end{proof}

Also taking $a=p$ for a prime $p$ in Corollary~\ref{C:first}, we conclude the following on the distribution of $p$-ranks.  
\begin{corollary}\label{C:prank}
Let $p$ be a prime and $\epsilon >0$.
Let $\epsilon>0$ and let $M\in M_{n\times (n+u)}(\Z/p\Z)$  be an $\epsilon$-balanced random matrix.
 For every non-negative integer $k$
$$
\lim_{n\ra\infty} \P(rank(M)=n-k) =p^{-k(k+u)}\prod_{i=1}^{k} (1-p^{-i})^{-1} \prod_{i=1}^{k+u} (1-p^{-i})^{-1}\prod_{i\geq 1} (1-p^{-i}) 
$$
\end{corollary}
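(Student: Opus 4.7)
The plan is to derive Corollary~\ref{C:prank} by combining Corollary~\ref{C:first} (taken with $a=p$) with a Cohen--Lenstra partition calculation. Since $M$ has entries in $\F_p=\Z/p\Z$, its cokernel is already an $\F_p$-vector space, so $\cok(M)\tensor \Z/p\Z\isom \cok(M)$, and for an $n\times(n+u)$ matrix over a field $\operatorname{rank}(M)=n-k$ holds exactly when $\cok(M)\isom \F_p^k$. Applying Corollary~\ref{C:first} (to the $\Z_p$-lift of $M$ with entries in a fixed set of representatives for $\F_p$, which is $\epsilon$-balanced in $\Z_p$ since $\Z_p$ has a unique maximal ideal) with $a=p$ and $G=\F_p^k$ yields
\[
\lim_{n\to\infty}\P(\operatorname{rank}(M)=n-k)=\P(Y\tensor \F_p \isom \F_p^k)=\P(\dim_{\F_p} Y/pY=k),
\]
where $Y$ is the $u$-Cohen--Lenstra random finite abelian $p$-group constructed just before Lemma~\ref{L:CLmom}.

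The second step is to evaluate this probability. The finite abelian $p$-groups $B$ with $B/pB\isom \F_p^k$ are precisely $B_\lambda:=\bigoplus_{i=1}^k \Z/p^{\lambda_i}\Z$ for a partition $\lambda=(\lambda_1\geq\cdots\geq\lambda_k\geq 1)$, so
\[
\P(\dim_{\F_p} Y/pY=k)=\prod_{j\geq 1}(1-p^{-j-u})\sum_{\ell(\lambda)=k}\frac{1}{p^{u|\lambda|}\,|\Aut(B_\lambda)|}.
\]
The identity I need to establish is the classical Cohen--Lenstra evaluation
\[
\sum_{\ell(\lambda)=k}\frac{1}{p^{u|\lambda|}\,|\Aut(B_\lambda)|}=\frac{p^{-k(k+u)}}{\prod_{i=1}^{k}(1-p^{-i})\prod_{j=u+1}^{k+u}(1-p^{-j})},
\]
which can be obtained by iterating the recursion \cite[Proposition 4.1]{Cohen1984} together with \cite[Corollary 3.7]{Cohen1984} (in the spirit of the proof of Lemma~\ref{L:CLmom}), or equivalently from standard Hall--Littlewood $q$-series manipulations.

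Substituting and simplifying using $\prod_{j\geq 1}(1-p^{-j-u})=\prod_{j\geq u+1}(1-p^{-j})$ collapses the product to $\prod_{j\geq k+u+1}(1-p^{-j})=\prod_{i=1}^{k+u}(1-p^{-i})^{-1}\prod_{i\geq 1}(1-p^{-i})$, producing exactly the stated formula. The main (and really only) obstacle is the combinatorial partition-sum identity above; a fully self-contained alternative that bypasses it is to note that the limits $q_k:=\lim_n\P(\operatorname{rank}(M)=n-k)$ exist by Corollary~\ref{C:first} and satisfy the triangular system $\sum_{k\geq j}q_k\,\#\Sur(\F_p^k,\F_p^j)=p^{-ju}$ coming from Theorem~\ref{T:MomMat}, after which one verifies by a direct $q$-binomial computation that the claimed formula is the unique solution.
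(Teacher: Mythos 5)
Your proposal is correct and follows essentially the same route as the paper's proof: both apply the main moment results (you via Corollary~\ref{C:first}, the paper via Theorem~\ref{T:MomMat} plus Theorem~\ref{T:MomDetDetail}, which is what Corollary~\ref{C:first} packages) with $a=p$ to identify $\lim_n\P(\operatorname{rank}(M)=n-k)$ with $\P(\dim_{\F_p} Y/pY=k)$, and both then read off this Cohen--Lenstra rank probability from \cite{Cohen1984}. Two small points of comparison. First, the paper applies Theorem~\ref{T:MomMat} directly to the matrix over $\Z/p\Z$, whereas you lift to $\Z_p$ in order to invoke Corollary~\ref{C:first} as stated; your lift is legitimate (the $\epsilon$-balanced condition for $\Z_p$ concerns only reduction mod $p$, and $\cok(\widetilde M)\tensor\Z/p\Z\cong\cok(M)$), but it is a slight detour that the paper avoids. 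Second, for the final evaluation the paper simply cites \cite[Theorem~6.3]{Cohen1984}, while you sketch a partition-sum derivation from \cite[Proposition~4.1, Corollary~3.7]{Cohen1984} and also propose the triangular moment system $\sum_{k\geq j} q_k\,\#\Sur(\F_p^k,\F_p^j)=p^{-ju}$ as a self-contained alternative; that system indeed comes from the second display of Theorem~\ref{T:MomDetDetail} with $a=p$, and it is closest in spirit to the paper's own stated alternative of comparing to uniform random matrices over $\F_p$ and using the elementary rank count. In both your route and the paper's, the explicit $q$-series identity is deferred to a reference rather than proved, so the level of rigor is comparable.
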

\begin{proof}
We apply Theorem~\ref{T:MomMat} with $a=p$ and Theorem~\ref{T:MomDetDetail} with $a=p$ to $\cok(M)$ and $Y$.
We can read off the rank distribution of $Y$ from \cite{Cohen1984}[Theorem 6.3].  (Alternatively, instead of $Y$ we could use cokernels of $H_n\in M_{n\times (n+u)}(\Z/p\Z)$ from the uniform distribution and use the elementary count of matrices over $\Z/p\Z$ of a given rank.)
\end{proof}

\subsection*{Acknowledgements} 
This work was done with the support of an American Institute of Mathematics Five-Year Fellowship and National Science Foundation grants DMS-1301690.  The author thanks John Voight for useful comments on an earlier draft of this paper.

\def\cprime{$'$}


\begin{thebibliography}{Woo14b}

\bibitem[BVW10]{Bourgain2010}
Jean Bourgain, Van~H. Vu, and Philip~Matchett Wood.
\newblock On the singularity probability of discrete random matrices.
\newblock {\em Journal of Functional Analysis}, 258(2):559--603, 2010.

\bibitem[CL84]{Cohen1984}
H.~Cohen and H.~W. Lenstra, Jr.
\newblock Heuristics on class groups of number fields.
\newblock In {\em Number theory, Noordwijkerhout 1983 (Noordwijkerhout, 1983)},
  volume 1068 of {\em Lecture Notes in Math.}, pages 33--62. Springer, Berlin,
  1984.

\bibitem[CRR90]{Charlap1990}
Leonard~S. Charlap, Howard~D. Rees, and David~P. Robbins.
\newblock The asymptotic probability that a random biased matrix is invertible.
\newblock {\em Discrete Mathematics}, 82(2):153--163, June 1990.

\bibitem[EVW09]{Ellenberg2009}
Jordan~S. Ellenberg, Akshay Venkatesh, and Craig Westerland.
\newblock Homological stability for Hurwitz spaces and the Cohen-Lenstra
  conjecture over function fields.
\newblock {\em {arXiv}:0912.0325 {[}math{]}}, December 2009.

\bibitem[FK06]{Fouvry2006}
{\'E}tienne Fouvry and J{\"u}rgen Kl{\"u}ners.
\newblock Cohen{\textendash}Lenstra Heuristics of Quadratic Number Fields.
\newblock In Florian Hess, Sebastian Pauli, and Michael Pohst, editors, {\em
  Algorithmic Number Theory}, number 4076 in Lecture Notes in Computer Science,
  pages 40--55. Springer Berlin Heidelberg, January 2006.

\bibitem[FW89]{Friedman1989}
Eduardo Friedman and Lawrence~C. Washington.
\newblock On the distribution of divisor class groups of curves over a finite
  field.
\newblock In {\em Th{\'e}orie des nombres (Quebec, PQ, 1987)}, pages 227--239.
  de Gruyter, Berlin, 1989.

\bibitem[HB94]{Heath-Brown1994}
D.~R. Heath-Brown.
\newblock The size of Selmer groups for the congruent number problem. {II}.
\newblock {\em Inventiones Mathematicae}, 118(2):331--370, 1994.
\newblock With an appendix by P. Monsky.

\bibitem[KK01]{Kahn2001}
Jeff Kahn and J\&Aacute;{NOS} {KOML}\&Oacute;S.
\newblock Singularity Probabilities for Random Matrices over Finite Fields.
\newblock {\em Combinatorics, Probability and Computing}, 10(02):137--157,
  March 2001.

\bibitem[KL75]{Kovalenko1975a}
I.~N. Kovalenko and A.~A. Levitskaja.
\newblock Limiting behavior of the number of solutions of a system of random
  linear equations over a finite field and a finite ring.
\newblock {\em Doklady Akademii Nauk {SSSR}}, 221(4):778--781, 1975.

\bibitem[Koz66]{Kozlov1966}
M.~V. Kozlov.
\newblock On the rank of matrices with random Boolean elements.
\newblock {\em Soviet Mathematics. Doklady}, 7:1048--1051, 1966.

\bibitem[Map10]{Maples2010}
Kenneth Maples.
\newblock Singularity of Random Matrices over Finite Fields.
\newblock {\em {arXiv}:1012.2372 {[}math{]}}, December 2010.

\bibitem[Map13]{Maples2013}
Kenneth Maples.
\newblock Cokernels of random matrices satisfy the Cohen-Lenstra heuristics.
\newblock {\em {arXiv}:1301.1239 {[}math{]}}, January 2013.

\bibitem[NV11]{Nguyen2011}
Hoi Nguyen and Van Vu.
\newblock Optimal inverse Littlewood{\textendash}Offord theorems.
\newblock {\em Advances in Mathematics}, 226(6):5298--5319, April 2011.

\bibitem[TV07]{Tao2007}
Terence Tao and Van Vu.
\newblock On the singularity probability of random Bernoulli matrices.
\newblock {\em Journal of the American Mathematical Society}, 20(3):603--628,
  2007.

\bibitem[TV10]{Tao2010b}
Terence Tao and Van Vu.
\newblock A sharp inverse Littlewood-Offord theorem.
\newblock {\em Random Structures \& Algorithms}, 37(4):525--539, 2010.

\bibitem[Woo14a]{Wood2014}
Melanie~Matchett Wood.
\newblock The distribution of sandpile groups of random graphs.
\newblock {\em {arXiv}:1402.5149 {[}math{]}}, February 2014.

\bibitem[Woo14b]{Wood2014a}
Melanie~Matchett Wood.
\newblock Parametrization of ideal classes in rings associated to binary forms.
\newblock {\em Journal f{\"u}r die reine und angewandte Mathematik (Crelles
  Journal)}, 2014(689):169--199, April 2014.

\end{thebibliography}
\end{document}